\documentclass[a4paper,oneside,reqno]{amsart}

\calclayout

\usepackage{etoolbox}

\newtoggle{default}\toggletrue{default}
\newtoggle{birk}\togglefalse{birk}
\newtoggle{birk_t2}\togglefalse{birk_t2}
\newtoggle{siam}\togglefalse{siam}
\newtoggle{springer}\togglefalse{springer}

\newtoggle{endwithsymbol}\togglefalse{endwithsymbol}

\toggletrue{default}
\toggletrue{endwithsymbol}

\usepackage[utf8]{inputenc}
\usepackage[T1]{fontenc}
\usepackage{lmodern}
\usepackage[english]{babel}
\usepackage{microtype} 

\usepackage{amsmath,amssymb,amsthm}
\usepackage{thmtools} 

\usepackage[hypertexnames=false]{hyperref} 
\hypersetup{hidelinks}
\usepackage[nameinlink,sort]{cleveref} 
\usepackage{orcidlink}

\usepackage{mathtools} 
\usepackage{upgreek}
\usepackage{accents} 
\usepackage{stmaryrd} 

\usepackage{tikz} 
\usetikzlibrary{cd}
\usetikzlibrary{calc}

\usepackage{dsfont} 
\usepackage{pifont} 

\usepackage{ifthen}
\usepackage{enumitem}
\setlist[itemize]{labelindent=\parindent,leftmargin=*,itemsep=3pt,topsep=5pt}
\setlist[enumerate]{label=\textup{(\roman{*})},itemsep=3pt,labelindent=0pt,leftmargin=*}

\usepackage{nicematrix}

\usepackage{graphicx} 
\usepackage{subcaption}

\usepackage{booktabs} 
\usepackage{esint} 

\usepackage{bbm}

\usepackage{color}

\usepackage{pgfpages} 
\usepackage{pgfplots} 
\pgfplotsset{compat=1.13} 

\definecolor{NathanaelColor}{rgb}{0.2,0.0,0.6}

\renewcommand{\leq}{\leqslant}
\renewcommand{\geq}{\geqslant}

\newcommand{\R}{\mathbb{R}} 
\newcommand{\N}{\mathbb{N}} 
\newcommand{\Q}{\mathbb{Q}} 
\newcommand{\C}{\mathbb{C}} 
\newcommand{\CC}{\mathbb{C}}

\newcommand{\Cc}[1][\infty]{\mathrm{C}_{\mathrm{c}}\ifthenelse{\equal{#1}{}}{}{^{#1}}}
\newcommand{\Lp}[2][]{\mathrm{L}_{#2\ifthenelse{\equal{#1}{}}{}{,#1}}} 
\newcommand{\Lb}{\mathcal{L}_{\mathrm{b}}} 
\newcommand{\sobH}{\mathrm{H}}
\newcommand{\cH}{\accentset{\circ}{\sobH}}

\newcommand{\iu}{\mathrm{i}} 

\DeclareMathOperator{\ran}{ran}

\DeclareMathOperator{\spn}{span}

\DeclareMathOperator{\sym}{sym}
\DeclareMathOperator{\dev}{dev}
\DeclareMathOperator{\dom}{dom}

\DeclareMathOperator{\grad}{grad}
\DeclareMathOperator{\dive}{div}
\DeclareMathOperator{\curl}{curl}

\newcommand{\cgrad}{\mathop{\accentset{\circ}{\grad}}}
\newcommand{\ccurl}{\mathop{\accentset{\circ}{\curl}}}

\newcommand{\cgradgrad}{\mathop{\accentset{\circ}{\operatorname{He}}}}

\DeclarePairedDelimiterX{\dset}[2]{\{}{\}}{#1\,\delimsize\vert\,\mathopen{} #2}
\DeclarePairedDelimiterX{\scprod}[2]{\langle}{\rangle}{#1,#2}

\renewcommand{\Re}{\operatorname{Re}}

\newcommand{\Htopo}{\mathrm{H}}
\newcommand{\nlHtopo}{\mathrm{nlH}}

\theoremstyle{plain}
\newtheorem{theorem}{Theorem}[section]
\newtheorem*{mtheorem}{Free Lunch Theorem}
\newtheorem{lemma}[theorem]{Lemma}

\newtheorem{corollary}[theorem]{Corollary}

\iftoggle{endwithsymbol}{%
    \declaretheorem[style=definition,sibling=theorem,qed=\ding{169}]{definition}
    \declaretheorem[style=definition,sibling=theorem,qed=\ding{169}]{example}
    \declaretheorem[style=definition,sibling=theorem,qed=\ding{169}]{problem}
    \declaretheorem[style=definition,sibling=theorem,qed=\ding{169}]{assumption}
    \declaretheorem[style=definition,numbered=no,qed=\ding{169}]{claim}

    \declaretheorem[style=remark,sibling=theorem,qed=\ding{169}]{remark}
  }%
  {%
    \theoremstyle{definition}
    
    \newtheorem{example}[theorem]{Example}

    \theoremstyle{remark}
    \newtheorem{remark}[theorem]{Remark}
  }%

\begin{document}

\title{The Free Lunch Theorem of Homogenisation} 

\author[A.~Buchinger]{Andreas Buchinger\,\orcidlink{0009-0004-4203-5874}}
\address[A.B.]{Technische Universität Hamburg\\
  Institut für Mathematik\\
  Am Schwarzenberg-Campus 3\\
  D-21073 Hamburg\\
  Germany}
  \email{andreas.buchinger@tuhh.de}

\author[M.~Porfido]{Marianna Porfido\,\orcidlink{0009-0000-0695-9894}}
\address[M.P.]{TU Bergakademie Freiberg \\
  Institute of Applied Analysis \\
  Akademiestrasse 6 \\
  D-09596 Freiberg \\
  Germany}
\email{marianna.porfido@math.tu-freiberg.de}

\author[M.~Waurick]{Marcus Waurick\,\orcidlink{0000-0003-4498-3574}}
\address[M.W.]{TU Bergakademie Freiberg \\
  Institute of Applied Analysis \\
  Akademiestrasse 6 \\
  D-09596 Freiberg \\
  Germany}
\email{marcus.waurick@math.tu-freiberg.de}

\date{\today}
\dedicatory{}

\keywords{Homogenisation, $\Htopo$-convergence, nonlocal $\Htopo$-convergence}


\ifboolexpr{togl{birk_t2} or togl{birk}}{%
\subjclass{}%
}{%
\subjclass[MSC 2020]{Primary:
 35B27, 47F10; Secondary: 58J10, 31B30}%
}%

\thanks{The second author is member of the
Gruppo Nazionale per l’Analisi Matematica, la Probabilit\`a e le loro Applicazioni (GNAMPA) of the Istituto Nazionale di Alta Matematica (INdAM)}


\begin{abstract} We show that H-convergence for multiplication type operators as envisioned by Murat and Tartar in the 1970's always implies nonlocal H-convergence as introduced in 2018 in Calc.~Var.~PDE 57(6):159. In contrast to earlier findings, the results presented here work for arbitrary space dimensions, are not bound to a certain geometry of the underlying domain, and do not explicitly require an underlying Hilbert complex for the application of any particular version of the div-curl lemma. We extend classical theory and the main results to more general differential operators with different boundary conditions and orders. Furthermore, the present results confirm homogenisation formulas used in the literature of which we failed to find an explicit proof. As a consequence, H-convergence for multiplication operators in divergence form problems will always imply H-type convergence for a different variational problem for free.
  \end{abstract}

\ifboolexpr{togl{default} or togl{birk_t2} or togl{birk}}{\maketitle}{}%

\section{Introduction}\label{sec:intro}

In the 1840s in British and American taverns a tactic to incentivise customers to frequent these bars was to offer lunch for free in order to have them indirectly pay for it by buying expensive drinks. This rather obvious trick was later abbreviated by `there is no free lunch’ and, thus, becoming a widely used proverb in the English language. 

In a nutshell, in contrast to everyday experience and 19th century taverns, the main contribution of the present article is the following meta-theorem, which we coin the `Free Lunch Theorem' for ease of loose reference, where the presence of convergence in a homogenisation sense always yields an associated convergence for free.
\begin{mtheorem}
Given a sequence of multiplication operators $(a_n)_{n\in \N}$ converging to a limit operator $a$ in some homogenisation sense. Then there exists a sequence of associated problems such that $(a_n^{-1})_{n\in \N}$ converges to $a^{-1}$ in a similar homogenisation sense. 
\end{mtheorem}
It is the main aim of this introduction to make precise the meaning and impact of the Free Lunch Theorem. We start with the classical situation for divergence form problems: In order to analyse non-periodic homogenisation problems for non-selfadjoint multiplication operators, Tartar and Murat coined the notion of \emph{$\Htopo$-convergence} (see, e.g., \cite{MuTa97,Ta09}). 

Note that all inner products considered in this paper are linear in the second entry, conjugate linear in the first, and $d\in\N\setminus\{0\}$ is fixed.
 Let $0<\alpha\leq\beta$, $\Omega\subseteq \R^d$ open and bounded, and consider $\Lp{\infty}$-matrices as multiplication type operators $\Lp{\infty}(\Omega)^{d\times d}\subseteq \Lb(\Lp{2}(\Omega)^{d})$, i.e., linear bounded operators on $\Lp{2}(\Omega)^{d}$ via
$\Lp{2}(\Omega)^{d}\ni f\mapsto a(\cdot)f(\cdot)$ for $a\in \Lp{\infty}(\Omega)^{d\times d}$. Define
\[
   M(\alpha,\beta;\Omega)\coloneqq \{ a\in \Lp{\infty}(\Omega)^{d\times d}: \Re a\geq \alpha, \Re a^{-1}\geq 1/\beta\},
\]i.e., for a.e.\ $x\in\Omega$ we have $\Re\langle y, a(x)y\rangle_{\C^d}\geq\alpha\lvert y\rvert^2_{\C^d}$ for all $y\in\C^d$, and similarly for $\Re a^{-1}\geq 1/\beta$. A sequence $(a_n)_{n\in\N}$ in $M(\alpha,\beta;\Omega)$ is said to \emph{(locally) $\Htopo$-converge} to $a\in M(\alpha,\beta;\Omega)$ if the following holds: For each $f\in \Htopo^{-1}(\Omega) \coloneqq \cH^1(\Omega)'\coloneqq\Lb(\cH^1(\Omega),\C)$, i.e., each continuous functional on the space of $\sobH^1$-functions vanishing at the boundary, let $u_n\in \cH^1(\Omega)$, $n\in \N$, be the Lax--Milgram solution of
\[
    \forall \phi\in \cH^1(\Omega):\langle a_n \cgrad u_n , \cgrad \phi\rangle_{\Lp{2}(\Omega)^d} = f(\phi)\text{,}\]
where $\cgrad$ is the restriction to $\cH^1(\Omega)$ of the weak gradient $\grad$ on $\sobH^1(\Omega)$. 
Then, $u_n\rightharpoonup u \in \cH^1(\Omega)$ and $a_n \grad u_n \rightharpoonup a\grad u \in \Lp{2}(\Omega)^d$ where $u\in \cH^1(\Omega)$ is the unique solution of
\[
    \forall \phi\in \cH^1(\Omega):\langle a \cgrad u , \cgrad \phi\rangle_{\Lp{2}(\Omega)^d} = f(\phi)\text{,}
    \]and $\rightharpoonup$ stands for weak convergence in $\cH^1(\Omega)$ and $\Lp{2}(\Omega)^{d}$ respectively.
    
    This notion has been generalised in \cite{Wa18}, additionally allowing for general bounded linear operators $a \in \Lb(\Lp{2}(\Omega)^d)$, i.e., removing the restriction to multiplication type operators. The corresponding notion coined in \cite{Wa18} is \emph{nonlocal $\Htopo$-convergence}. We quickly recall it in the setting of the main example case presented in \cite{Wa18}: Let $\Omega\subseteq \R^3$ be a bounded weak Lipschitz domain with connected complement. Set
    \[
   \mathcal{M}(\alpha,\beta;\Lp{2}(\Omega)^{3})\coloneqq \{ a\in \Lb(\Lp{2}(\Omega)^{3}): \Re a\geq \alpha, \Re a^{-1}\geq 1/\beta\}\text{,}
\]
where the inequalities are meant in the sense of positive definiteness and $\Re a \coloneqq \frac12(a+a^*)$.
Furthermore, define $\curl \colon \dom(\curl)\subseteq \Lp{2}(\Omega)^3 \to \Lp{2}(\Omega)^3$ via $\curl E\coloneqq \nabla \times E$ for
\[
E\in \dom(\curl)\coloneqq \{ E \in \Lp{2}(\Omega)^3: \nabla \times E \in \Lp{2}(\Omega)^3\},\]
 where $\nabla \times $ is applied in the distributional sense. Note that $\curl$ is then a densely defined, closed linear operator, and $V_{\curl}\coloneqq \dom(\curl)\cap \ker(\curl)^\perp$, endowed with the graph inner product of $\curl$, is a Hilbert space. Furthermore, we obtain the Helmholtz decomposition
   \begin{equation}\label{eq:Helmholtz}
   \Lp{2}(\Omega)^{3}= \ran(\cgrad)\oplus\ran(\curl).
   \end{equation}
    See, e.g., \cite[Section~2]{Wa18} or \cite[Section~1.4]{Bu25} for details.
 A sequence $(a_n)_{n\in\N}$ in $\mathcal{M}(\alpha,\beta;\Lp{2}(\Omega)^{3})$ is said to \textbf{nonlocally $\Htopo$-converge} to some $a\in    \mathcal{M}(\alpha,\beta;\Lp{2}(\Omega)^{3})$ if the following holds: 
    For each $f\in \sobH^{-1}(\Omega)$ and $g\in (V_{\curl})^{\prime}$, let $u_n\in \cH^1(\Omega)$ and $v_n\in V_{\curl}$, $n\in \N$, be the Lax--Milgram solution of
\[
    \forall \phi\in \cH^1(\Omega) : \langle a_n \cgrad u_n , \cgrad \phi\rangle_{\Lp{2}(\Omega)^3} = f(\phi)
\]
and
\[
    \forall \psi \in V_{\curl} : \langle a_n^{-1} \curl v_n , \curl \psi\rangle_{\Lp{2}(\Omega)^3} = g(\psi)
\]respectively. (Note that the assumptions on $\Omega$ render the second problem well-defined, see, e.g., \cite[Section~2]{Wa18} or \cite{TrWa14} or also \Cref{thm:wp} for the details.) Then, $u_n\rightharpoonup u \in \cH^1(\Omega)$ and $a_n \cgrad u_n \rightharpoonup a\cgrad u \in \Lp{2}(\Omega)^d$, where $u\in \cH^1(\Omega)$ is the unique solution of
\[
   \forall \phi\in \cH^1(\Omega): \langle a \cgrad u , \cgrad \phi\rangle_{\Lp{2}(\Omega)^3} = f(\phi),
    \]
    as well as
    $v_n\rightharpoonup v \in V_{\curl}$ and $a_n^{-1} \curl v_n \rightharpoonup a^{-1}\curl v \in \Lp{2}(\Omega)^3$, where $v \in V_{\curl}$ is the unique solution of
\begin{equation}\label{eq:ComplCurlProbl}
    \forall \psi\in V_{\curl}:\langle a^{-1} \curl v , \curl \psi\rangle_{\Lp{2}(\Omega)^3} = g(\psi)\text{.}
    \end{equation}
    Both $\Htopo$-convergence and nonlocal $\Htopo$-convergence induce topologies $\tau_{\Htopo}$ and $\tau_{\nlHtopo}$. The fundamental observations (see, e.g., \cite{MuTa97}, \cite[Chapter~6]{Ta09} and \cite[Section~5]{Wa18}) in this context are that both $(M(\alpha,\beta;\Omega),\tau_{\Htopo})$ and $(\mathcal{M}(\alpha,\beta;\Lp{2}(\Omega)^{3}),\tau_{\nlHtopo})$ form a metrisable and compact topological space. The obvious continuous embedding $(M(\alpha,\beta;\Omega),\tau_{\nlHtopo})\hookrightarrow (M(\alpha,\beta;\Omega),\tau_{\Htopo})$, where $\tau_{\nlHtopo}$ now stands for the trace topology, leads to the question whether or not    $M(\alpha,\beta;\Omega)$ is closed in $(\mathcal{M}(\alpha,\beta;\Lp{2}(\Omega)^{3}),\tau_{\nlHtopo})$. Indeed, in the affirmative case, one has that
    \[(M(\alpha,\beta;\Omega),\tau_{\nlHtopo})= (M(\alpha,\beta;\Omega),\tau_{\Htopo}).\]
    Thus, given local operators only, $\Htopo$-convergence would yield the corresponding convergence for the $\curl$-problems for free including a handy formula for the limit coefficient: simply the inverse of the (local) $\Htopo$-limit of the sequence of inverses.
    
    In \cite[Theorem 5.11]{Wa18}, such a result was claimed. However, the provided proof contains a gap. Skimming the literature for corresponding results, it seems that only \cite[Chapter~5]{ZKO94} provides an affirmative answer for the case of self-adjoint coefficients. Note that the respective proof is based on variational integrals and minimisation techniques invoking $\Gamma$-convergence and rendering the assumption of self-adjointness an essential requirement to make the arguments work. Even though \cite[Theorem 5.11]{Wa18} has already been explicitly used in several publications, e.g., in  \cite{Wa25,BuSkWa24,BFSW24}, and has been confirmed numerically as well as analytically for periodic set-ups, see, e.g., implicitly in \cite{BFSW24,We01},  a proper proof in the non-sefadjoint case appears to have been missing in the literature. 
Based on \cite[Chapter~5]{ZKO94}, a first approach that closes the gap in the proof of \cite[Theorem 5.11]{Wa18} was found in \cite[Section~1.6]{Bu25}. There $d=3$ and topological assumptions on $\Omega$ play central roles. Note that similar techniques have been employed in \cite{BW26,BBCEJW26} transitioning $H$-convergence type statements to statements for nonlocal $H$-convergence in very specific cases. 
    
    Irrespective of completing the theory for homogenisation problems with time-independent partial differential equations, we stress that it was demonstrated in many publications -- we mention \cite{BuSkWa24,BFSW24,SeTrWa22,Wa16a} as example cases -- that nonlocal $\Htopo$-convergence is the decisive tool to obtain homogenisation results for time-dependent partial differential equations; particularly in the framework of evolutionary equations.
    
    In this paper, we shall not only -- like \cite[Section~1.6]{Bu25} and \cite{BW26} -- close the gap in the proof of \cite[Theorem 5.11]{Wa18}, but we will also provide corresponding results for space dimensions different from $3$ and for a more general class of differential operators with different boundary conditions. We neither require smoothness assumptions for the underlying domain nor do we need topological assumptions like, e.g., having a connected complement. In order to present the main result, we recall the necessary abstract notion introduced in \cite{Wa25}; see also \cite{NiWa22}. For this, let $\mathcal{H}$ be a Hilbert space, $\mathcal{H}_0\subseteq \mathcal{H}$ a closed subspace, and set $\mathcal{H}_1\coloneqq \mathcal{H}_0^\perp$. For $a\in \Lb(\mathcal{H})$, we define $a_{jk} \coloneqq \iota_j^* a \iota_k$, where $\iota_j \colon \mathcal{H}_j \hookrightarrow \mathcal{H}$ is the canonical embedding and $j,k\in \{0,1\}$. On
    \begin{equation*}
    \mathcal{M}(\mathcal{H}_0,\mathcal{H}_1)\coloneqq \{a\in \Lb(\mathcal{H}) ; a^{-1} \in \Lb(\mathcal{H}), a_{00}^{-1} \in \Lb(\mathcal{H}_0)\}\text{,}
    \end{equation*}
     we introduce $\tau(\mathcal{H}_0,\mathcal{H}_1)$, the \textbf{Schur topology (w.r.t.~$(\mathcal{H}_0,\mathcal{H}_1)$)}, as the initial Hausdorff topology induced by the four mappings
    \[
      a\mapsto a_{00}^{-1},\       a\mapsto a_{00}^{-1}a_{01},\       a\mapsto a_{10}a_{00}^{-1},\       a\mapsto a_{11}-a_{10}a_{00}^{-1}a_{01},
    \]where the range spaces are endowed with their corresponding weak operator topology.
    
    With $g_0(\Omega) \coloneqq\ran(\cgrad) =\grad[\cH^1(\Omega)]\subseteq \Lp{2}(\Omega)^d$ (which, for bounded $\Omega$, is always closed by Poincar\'e's inequality), the main result may be written as follows.
    \begin{theorem}\label{thm:main-proto} Let $\Omega\subseteq \R^d$ be open and bounded, and $0<\alpha\leq\beta$. Then,
    \[(M(\alpha,\beta; \Omega),\tau_{\Htopo})=(M(\alpha,\beta; \Omega),\tau(g_0(\Omega),g_0(\Omega)^{\perp}))\text{.}\]
     In particular, for $(a_n)_{n\in\N}$ in $M(\alpha,\beta; \Omega)$ and $a\in \Lb(\Lp{2}(\Omega)^d)$, the following conditions are equivalent:
    \begin{enumerate}
      \item[(i)] $a\in M(\alpha,\beta;\Omega)$ and $a_n\to a$ in $\tau_{\Htopo}$;
      \item[(ii)] $a \in \mathcal{M}(g_0(\Omega),g_0(\Omega)^{\perp})$ and $a_n\to a$ in $\tau(g_0(\Omega),g_0(\Omega)^{\bot})$.
    \end{enumerate}    
    \end{theorem}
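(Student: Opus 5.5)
The plan is to split the equality of topologies into the two implications of the equivalence. Both topologies are metrisable and compact on the relevant sets: $\tau_{\Htopo}$ on $M(\alpha,\beta;\Omega)$, and the Schur topology on the bounded sets of $\mathcal{M}(g_0(\Omega),g_0(\Omega)^\perp)$ that contain $M(\alpha,\beta;\Omega)$. So it suffices to prove two things: convergence in one topology forces convergence of the relevant quantities in the other, and limits are identified uniquely. Throughout I write $\mathcal{H}_0 = g_0(\Omega)$, $\mathcal{H}_1=\mathcal{H}_0^\perp$, and $P_0,P_1$ for the orthogonal projections onto them.

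\textbf{Step 1: the first three Schur maps are equivalent to $\Htopo$-convergence.} Identifying $\Htopo^{-1}(\Omega)$ with $\mathcal{H}_0$ via $\cgrad$, the Lax--Milgram solution is $\cgrad u_n=(a_n)_{00}^{-1}\tilde f$. Hence weak convergence $u_n\rightharpoonup u$ for all $f$ is exactly weak operator convergence of $(a_n)_{00}^{-1}$. Likewise, $a_n\cgrad u_n\rightharpoonup a\cgrad u$ is equivalent to convergence of $(a_n)_{00}^{-1}$ together with $(a_n)_{10}(a_n)_{00}^{-1}$. The map $a\mapsto a_{00}^{-1}a_{01}$ is handled by duality, using the classical fact that $\Htopo$-convergence of $a_n$ implies $\Htopo$-convergence of $a_n^*$ to $a^*$. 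Conversely, Schur convergence of these three maps immediately gives the two weak limits that define $\Htopo$-convergence.

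\textbf{Step 2: the fourth map.} The remaining issue is the Schur complement $a_{11}-a_{10}a_{00}^{-1}a_{01}=((a^{-1})_{11})^{-1}$. I will not compute its limit directly. Instead, let $a_n\to a$ in $\tau_{\Htopo}$ and pass to a Schur-convergent subsequence with limit $b$, using Schur compactness together with the uniform bounds from $\alpha,\beta$. By Step 1, $b_{00}^{-1}=a_{00}^{-1}$, $b_{10}b_{00}^{-1}=a_{10}a_{00}^{-1}$ and $b_{00}^{-1}b_{01}=a_{00}^{-1}a_{01}$. If I can show that $b$ is itself a multiplication operator in $M(\alpha,\beta;\Omega)$, then the same data show that $b$ is an $\Htopo$-limit of $a_n$. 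Uniqueness of $\Htopo$-limits, which holds because local coefficients are determined by their action on the (locally spanning) gradients, then gives $b=a$. Hence the whole sequence converges in the Schur topology. The same locality statement also gives the direction (ii)$\Rightarrow$(i): the limit $a$ is then local, lies in $M(\alpha,\beta;\Omega)$ by closedness of $M(\alpha,\beta;\Omega)$ under $\tau_{\Htopo}$, and is the $\Htopo$-limit by Step 1.

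\textbf{Step 3 (main obstacle): locality of Schur limits of local operators.} I would show that $b$ commutes with multiplication by every $\varphi\in\Cc(\R^d)$; commuting with all such $\varphi$ forces $b$ to be an $\Lp{\infty}$-matrix multiplication. I would use the sequential characterisation of Schur convergence. For each $w$ there are $w_n\rightharpoonup w$ such that $P_1w_n$ and $P_0a_nw_n$ converge strongly, and then $a_nw_n\rightharpoonup bw$. Given such $w_n$, consider $\varphi w_n$. The key claim is that the commutators $[\varphi,P_0]$ are compact on $\Lp{2}(\Omega)^d$. To prove it, write $P_0=\cgrad(\cgrad^*\cgrad)^{-1}\cgrad^*$. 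The commutator then involves products of $\varphi$ with $\cH^1(\Omega)$-functions and of $\grad\varphi$ with $(\cgrad^*\cgrad)^{-1}\cgrad^*w_n$. Both are compact because Rellich's theorem holds for $\cH^1(\Omega)$ on any bounded open set, with no regularity or topological assumptions on $\Omega$. Since $a_n$ commutes with $\varphi$, it follows that $P_1(\varphi w_n)$ and $P_0(a_n\varphi w_n)=P_0(\varphi a_nw_n)$ still converge strongly. Therefore $b(\varphi w)=\lim a_n\varphi w_n=\varphi\, bw$. The points I expect to need most care are verifying the compactness of the commutator without boundary regularity, and making rigorous that the sequential characterisation of Schur limits applies in this generality.
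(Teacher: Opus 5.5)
Your proposal is correct, but it settles the key step by a genuinely different mechanism from the paper's.

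The paper proves the statement via \Cref{thm:main-general} with $C=\cgrad$. It also reduces everything to showing that an $\Htopo$-convergent sequence converges in the Schur topology. However, it computes the missing limits explicitly. \Cref{prop:Hconplus} gives stability of $\Htopo$-convergence under a fixed shift $z$ in the flux; it is proved with the div--curl \Cref{lem:dceasy} and \Cref{cor:adjH}. \Cref{prop:equivprobl} identifies $a(\cgrad u+z)=\iota_1(\iota_1^\ast a^{-1}\iota_1)^{-1}\iota_1^\ast z$. Together they give weak operator convergence of the Schur complement $(a_n^{-1})_{11}^{-1}$, and through \eqref{eq:AbstrCurlFlux} also of $a_{n,00}^{-1}a_{n,01}$.

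You obtain $a_{00}^{-1}a_{01}$ more cheaply via $(a^\ast)_{10}(a^\ast)_{00}^{-1}=(a_{00}^{-1}a_{01})^\ast$, and you never compute the limit of the Schur complement. Instead you show that Schur limits of bounded sequences of multiplication operators are again multiplication operators. You then identify the limit by its action on $g_0(\Omega)$ via \Cref{lem:localvectors}.

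Your ingredients check out:
\begin{enumerate}
\item[(a)] The two-sided sequential characterisation holds because all four Schur maps are uniformly bounded on $M(\alpha,\beta;\Omega)$, as in the proof of \Cref{lemma:CompSchurABPlusMultOp}. These bounds are what let you pass to limits along sequences $\tilde w_n$ with only $P_1\tilde w_n$ and $P_0a_n\tilde w_n$ strongly convergent.
\item[(b)] The commutator $[\varphi,P_0]$ is indeed compact without any boundary regularity. With $P_0w=\grad u$ and $P_0(\varphi w)=\grad v$, one has $[\varphi,P_0]w=\grad(\varphi u-v)-u\grad\varphi$. The map $w\mapsto u\grad\varphi$ is compact by Rellich on $\cH^1(\Omega)$. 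The term $\grad(\varphi u-v)$ is the Lax--Milgram solution for a functional that depends compactly on $w$ in $\cH^1(\Omega)'$, using the compact embedding $\Lp{2}(\Omega)\hookrightarrow\cH^1(\Omega)'$.
\item[(c)] In Step~2 you only need $b$ to be a multiplication operator, not $b\in M(\alpha,\beta;\Omega)$.
\end{enumerate}

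What your route buys:
\begin{enumerate}
\item[(a)] a structural fact of independent interest, namely that locality survives Schur limits;
\item[(b)] a uniform treatment of both implications;
\item[(c)] no need for \Cref{prop:Hconplus} or \Cref{prop:equivprobl}.
\end{enumerate}
What it costs: you rely on classical $\Htopo$-compactness for the bounds in (ii)$\Rightarrow$(i). Also, the commutator argument has to be re-verified for each operator $C$; it should go through under (com) via the product rule.

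The paper's route, conversely, exhibits the `free lunch' mechanism explicitly: convergence of the complementary problem on $\ran(C)^\perp$. It also applies unchanged to general $C$ under (com) and (nd).
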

In fact, we will prove this result for a more general class of differential operators with different boundary conditions, see \Cref{thm:main-general}.    
In particular, \cite[Theorem 5.11]{Wa18} follows:
      \begin{corollary}\label{cor:main-proto} Let $\Omega\subseteq \R^3$ be an open and bounded weak Lipschitz domain with connected complement, and  $0<\alpha\leq\beta$.
      Then,
    \[(M(\alpha,\beta; \Omega),\tau_{\Htopo})=(M(\alpha,\beta; \Omega),\tau_{\nlHtopo})\text{.}\]
     In particular, for $(a_n)_{n\in\N}$ in $M(\alpha,\beta; \Omega)$ and $a\in \Lb(\Lp{2}(\Omega)^3)$, the following conditions are equivalent:
    \begin{enumerate}
      \item[(i)] $a\in M(\alpha,\beta;\Omega)$ and $a_n\to a$ in $\tau_{\Htopo}$;
      \item[(ii)] $a \in \mathcal{M}(\alpha,\beta;\Lp{2}(\Omega)^3)$ and $a_n\to a$ in $\tau_{\nlHtopo}$.
    \end{enumerate}    
    \end{corollary}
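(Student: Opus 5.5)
The plan is to derive the corollary from \Cref{thm:main-proto}, using the characterisation of nonlocal $\Htopo$-convergence in terms of the Schur topology from \cite{Wa18,Wa25}. Under the standing assumptions on $\Omega$ (bounded, weak Lipschitz, connected complement), the Helmholtz decomposition \eqref{eq:Helmholtz} gives $g_0(\Omega)^\perp=\ran(\curl)$. It is known (see \cite[Section~5]{Wa18} and the abstract framework in \cite{Wa25,NiWa22}) that on $\mathcal{M}(\alpha,\beta;\Lp{2}(\Omega)^3)$ the topology $\tau_{\nlHtopo}$ coincides with the trace of the Schur topology $\tau(g_0(\Omega),\ran(\curl))$. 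So I would first recall this identification as a lemma. It comes from writing the two Lax--Milgram problems via the block decomposition of $a$ with respect to $\ran(\cgrad)\oplus\ran(\curl)$. The gradient problem encodes $a_{00}^{-1}$ and $a_{10}a_{00}^{-1}$. The curl problem for $a^{-1}$ involves $(a^{-1})_{11}^{-1}$, which by the Schur complement formula equals $a_{11}-a_{10}a_{00}^{-1}a_{01}$, together with the off-diagonal term $a_{00}^{-1}a_{01}$.

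With this in hand, the equality of topologies on $M(\alpha,\beta;\Omega)$ follows directly from \Cref{thm:main-proto}. For the sequential equivalence, (ii)$\Rightarrow$(i) goes as follows. If $a_n\to a$ nonlocally and $a\in\mathcal{M}(\alpha,\beta;\Lp{2}(\Omega)^3)$, then $a\in\mathcal{M}(g_0,g_0^\perp)$, since $\Re a\geq\alpha$ yields invertibility of $a_{00}$. By the identification, $a_n\to a$ in the Schur topology, and \Cref{thm:main-proto} (ii)$\Rightarrow$(i) gives $a\in M(\alpha,\beta;\Omega)$ and $\tau_{\Htopo}$-convergence.

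For (i)$\Rightarrow$(ii), \Cref{thm:main-proto} yields Schur convergence, and the identification turns this into nonlocal $\Htopo$-convergence. It remains to note that $a\in M(\alpha,\beta;\Omega)$ implies $a\in\mathcal{M}(\alpha,\beta;\Lp{2}(\Omega)^3)$. This holds because pointwise inequalities $\Re a(x)\geq\alpha$ and $\Re a(x)^{-1}\geq1/\beta$ integrate to the operator inequalities.

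The main obstacle is the careful matching of the nonlocal $\Htopo$-convergence definition (weak convergence of solutions and fluxes for all right-hand sides) with convergence in weak operator topology of the four Schur maps. The gradient part uses $f\in\sobH^{-1}(\Omega)$ ranging over all of $\cgrad^{*}$-images. The curl part identifies $(V_{\curl})'$ with $\ran(\curl)$ via $\curl$, which is a topological isomorphism from $V_{\curl}$ onto the closed range $\ran(\curl)$; this is where weak Lipschitz regularity and compact embeddings enter. If this equivalence is not already available as a cited theorem, I would prove it by expressing $u_n$, the flux $a_n\cgrad u_n$, $v_n$ and $a_n^{-1}\curl v_n$ explicitly through the block entries and reading off the four convergences.
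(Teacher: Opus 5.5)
Your proposal is correct and follows essentially the paper's route. You combine \Cref{thm:main-proto} with the identification of $\tau_{\nlHtopo}$ with the Schur topology $\tau(g_0(\Omega),\ran(\curl))$. In that identification, the gradient problem encodes $a\mapsto a_{00}^{-1}$ and $a\mapsto a_{10}a_{00}^{-1}$. The operator $\operatorname{LM}_{\curl}(a^{-1})$ together with the curl flux $a^{-1}\iota_1(a^{-1})_{11}^{-1}=\iota_0a_{00}^{-1}a_{01}+\iota_1$ (cf.~\eqref{eq:AbstrCurlFlux}) encodes the remaining two maps. The paper carries out this identification directly rather than citing it from \cite{Wa18,Wa25}, which is exactly the fallback computation you describe.
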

    
    As a another application, we provide the consequences for the three-dimensional setting more directly and with less regularity conditions on $\Omega$. In fact, this corollary is the prototypical incarnation of the Free Lunch Theorem. As it asserts an equivalence, both implications can be viewed as individual Free Lunch Theorems.
    \begin{corollary}\label{cor:curl} Let $\Omega\subseteq \R^3$ be an open and bounded weak Lipschitz domain, and $0<\alpha\leq\beta$. Let $(a_n)_{n\in\N}$ in $M(\alpha,\beta;\Omega)$ and $a\in M(\alpha,\beta;\Omega)$. 
    Then, the following conditions are equivalent:
    \begin{enumerate}
    \item[(i)] $a_n\to a$ in $\tau_{\Htopo}$
    \item[(ii)] for all $g\in (V_{\curl})^\prime$ and $v_n\in V_{\curl}$ given as the unique solutions to
    \[
        \forall \psi\in V_{\curl}:\langle a_n^{-1} \curl v_n , \curl \psi\rangle_{\Lp{2}(\Omega)^3} = g(\psi),
    \]
    we have
    \[
        v_n \rightharpoonup v\in V_{\curl}\text{ and } a_n^{-1}\curl v_n \rightharpoonup a^{-1}\curl v \in \Lp{2}(\Omega)^3,
    \]
    where $v\in V_{\curl}$ is the unique solution to
    \[
          \forall \psi\in V_{\curl}:\langle a^{-1} \curl v , \curl \psi\rangle_{\Lp{2}(\Omega)^3} = g(\psi).
    \]
    \end{enumerate}
    \end{corollary}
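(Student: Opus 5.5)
Corollary~\ref{cor:curl} follows from Theorem~\ref{thm:main-proto}, combined with the abstract characterisation of the Schur topology via Lax--Milgram solutions from \cite{Wa25}. The route runs through the Schur topology of the \emph{inverses} with respect to the swapped decomposition. First recall the general fact (see \cite{Wa25,NiWa22}): if $a$ is invertible with $a_{00}$ invertible, then $a^{-1}$ is invertible with $(a^{-1})_{11}$ invertible with respect to $(\mathcal{H}_1,\mathcal{H}_0)$. Block inversion shows that the four Schur maps of $a^{-1}$ for the swapped decomposition are
\[
(a^{-1})_{11}^{-1}=a_{11}-a_{10}a_{00}^{-1}a_{01},\quad (a^{-1})_{11}^{-1}(a^{-1})_{10}=-a_{10}a_{00}^{-1},
\]
\[
(a^{-1})_{01}(a^{-1})_{11}^{-1}=-a_{00}^{-1}a_{01},\quad (a^{-1})_{00}-(a^{-1})_{01}(a^{-1})_{11}^{-1}(a^{-1})_{10}=a_{00}^{-1}.
\]
Hence $a\mapsto a^{-1}$ is a homeomorphism from $(\mathcal{M}(\mathcal{H}_0,\mathcal{H}_1),\tau(\mathcal{H}_0,\mathcal{H}_1))$ onto $(\mathcal{M}(\mathcal{H}_1,\mathcal{H}_0),\tau(\mathcal{H}_1,\mathcal{H}_0))$. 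With Theorem~\ref{thm:main-proto}, condition (i) is therefore equivalent to $a_n^{-1}\to a^{-1}$ in $\tau(g_0(\Omega)^\perp,g_0(\Omega))$.

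Next I identify $g_0(\Omega)^\perp$ for a bounded weak Lipschitz domain in $\R^3$. Since $\cgrad^*=-\dive$, we have $g_0(\Omega)^\perp=\ker(\dive)$. By the known compactness and closed-range results for weak Lipschitz domains (Weck/Picard; cf.\ \cite[Section~2]{Wa18}, \cite{Bu25}), $\ran(\curl)$ is closed. The complex $\cgrad,\curl$ gives
\[
\ker(\dive)=\ran(\curl)\oplus \mathcal{K},
\]
where $\mathcal{K}$ is a finite-dimensional space of harmonic Neumann fields. Without connected complement, $\mathcal{K}$ need not vanish, which is why the $\tau_{\nlHtopo}$ statement in Corollary~\ref{cor:main-proto} needs that assumption. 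Well-posedness of the curl problem on $V_{\curl}$ (Theorem~\ref{thm:wp}) uses only $\Re a_n^{-1}\geq 1/\beta$ and closed range.

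The remaining task is to show that, for $b_n\coloneqq a_n^{-1}$ uniformly coercive and bounded, convergence $b_n\to b$ in $\tau(g_0^\perp,g_0)$ is equivalent to (ii). Condition (ii) concerns only the compressions of $b_n$ to the decomposition $\ran(\curl)\oplus\ran(\curl)^\perp$. By the abstract theorem in \cite{Wa25}, (ii) is equivalent to $b_n\to b$ in $\tau(\ran(\curl)^\perp,\ran(\curl))$. Here $\ran(\curl)$ plays the role of $\mathcal{H}_1=\ran(C)$ for the closed-range operator $C=\curl$.

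So the problem becomes comparing two Schur topologies whose distinguished subspaces differ by the finite-dimensional space $\mathcal{K}$. This is the main obstacle. My first attempt is a perturbation lemma: if $\mathcal{H}_0\subseteq\tilde{\mathcal{H}}_0$ with $\dim(\tilde{\mathcal{H}}_0\ominus\mathcal{H}_0)<\infty$, then on uniformly coercive, bounded sets the two Schur topologies coincide. The proof would compute the Schur maps for the finer splitting via a $3\times3$ block decomposition and iterated Schur complements. On a finite-dimensional block, weak and norm convergence agree, and inverses of uniformly coercive matrices converge. One direction of the equivalence then expresses the $\tilde{\mathcal{H}}_0$-maps through the $\mathcal{H}_0$-maps plus finite-rank blocks; the other direction reverses this.

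The delicate point is the mixed products, such as $b_{10}b_{00}^{-1}b_{01}$, restricted to $\mathcal{K}$. These converge only in the weak operator topology, so I must make sure that no product of two merely weakly convergent factors appears. Where one does appear, I would use compactness: by sequential compactness of the Schur topology on bounded coercive sets, pass to a subsequence, and identify limits through uniqueness of the limit in the other topology, which then yields convergence of the full sequence. Coercivity of $b$ and of the limits $a\in M(\alpha,\beta;\Omega)$ guarantees that all required inverses exist.
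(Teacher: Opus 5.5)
Your overall route matches the paper's: \Cref{thm:main-proto}, inversion as in \Cref{lem:inv}, the finite-dimensional summand $\mathcal{K}=\ker(\ccurl)\cap\ker(\dive)$, and a finite-dimensional perturbation of the splitting. However, the step ``by an abstract theorem, (ii) is equivalent to Schur convergence of $b_n=a_n^{-1}$ for the $\curl$-splitting'' is not an abstract fact, and the substance of the result sits exactly there.

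Take blocks with respect to $\ran(\curl)\oplus\ran(\curl)^\perp$, so that $\iota_0$ now embeds $\ran(\curl)$. By \Cref{thm:wp} and the identity $b\widetilde{\curl\kappa_0}\operatorname{LM}_{\curl}(b)=(\iota_0+\iota_1 b_{10}b_{00}^{-1})\bigl((\widetilde{\curl\kappa_0})^\diamond\iota_0\bigr)^{-1}$, condition (ii) only controls $b_{n,00}^{-1}$ and $b_{n,10}b_{n,00}^{-1}$. For general operators nothing follows for $b_{n,00}^{-1}b_{n,01}$ or for the Schur complement. For example, take $b_n=\begin{psmallmatrix}1&(-1)^n c\\0&1\end{psmallmatrix}$ with $0\neq c\in\Lb(\ran(\curl)^\perp,\ran(\curl))$ and $\lVert c\rVert\leq 1/2$. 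This lies in $\mathcal{M}(3/4,4/3;\Lp{2}(\Omega)^3)$ and has $n$-independent solutions and fluxes, yet it does not converge in $\tau(\ran(\curl),\ran(\curl)^{\perp})$. So your implication (ii)$\Rightarrow$(i) is unsupported as written.

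What rescues it is the locality of $a_n^{-1}$. You can cite \Cref{thm:main-general} for $C=\curl$, with (com) from Picard--Weber--Weck and (nd) from \Cref{exa:nd}(b). Its proof recovers the two missing Schur maps via \Cref{thm:Hcom}, \Cref{cor:adjH}, \Cref{prop:Hconplus} and \Cref{prop:equivprobl}. Alternatively, extract a $\tau_{\Htopo}$-convergent subsequence of $(a_n)$ by \Cref{thm:Hcom}, apply (i)$\Rightarrow$(ii) to it, and use uniqueness of local $\curl$-limits, which follows from \Cref{lem:localvectors}. Your direction (i)$\Rightarrow$(ii) is fine, since Schur convergence does imply Lax--Milgram convergence abstractly.

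Second, you have the slots for the $\curl$-problem reversed. Condition (ii) corresponds to $\tau(\ran(\curl),\ran(\curl)^{\perp})$, with $\ran(C)$ in the first slot, exactly as $g_0=\ran(\cgrad)$ was in your opening step. With your order you would be comparing $\tau(\ran(\curl)\oplus\mathcal{K},g_0)$ with $\tau(g_0\oplus\mathcal{K},\ran(\curl))$. Their distinguished subspaces are not finite-dimensional perturbations of each other, so your perturbation lemma would not apply.

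With the correct order, the comparison is precisely \Cref{thm:fintesub} with $\mathcal{H}_0=\ran(\curl)$ and $\mathcal{F}=\mathcal{K}$. There is no need to reprove it via your $3\times 3$ block sketch, whose ``delicate point'' you leave open. The paper applies that theorem to $a_n$ with $g_0\subseteq g_0\oplus\mathcal{F}$ before inverting; by \Cref{lem:inv} this is equivalent to your order. A minor point: $\mathcal{K}$ consists of harmonic Dirichlet fields, not harmonic Neumann fields.
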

    As a side note, we mention that the (well-known) independence of the boundary conditions of $\Htopo$-convergence directly implies the independence of the boundary conditions also for the curl-problem provided in (ii) of the previous corollary.
    
Finally, we will apply the Free Lunch Theorem to show a homogenisation result for a fourth-order elliptic problem corresponding to the biharmonic equation (compare \cite{PZ20} and \cite{W17_DCL}). As a consequence, we obtain H-compactness for the biharmonic equation with variable coefficients even though the standard techniques are not available anymore due to the missing (elementary) product rule in this context.
    
    In the following we shall approach a proof for  \Cref{thm:main-proto}  in a slightly more general setting. For this, we introduce a certain class of constant coefficient first-order differential operators in the next section. We prove a  div-curl-type result and provide some explicit examples, where the key assumptions are satisfied. The subsequent section is devoted to introducing a generalisation of (local) $\Htopo$-convergence. There, some elementary properties like compactness are shown under milder conditions than in the original literature. The subsequent section is devoted to the connection to nonlocal $\Htopo$-convergence providing proofs of the results mentioned in this introduction. In the subsequent section, we show the mentioned fourth-order homogenisation result.  The last section contains a conclusion of our findings.

\section{A div-curl Type Result for Linear First-Order Constant Coefficient Differential Operators}\label{sec:dct}

For the remaining section, we assume that $\Omega\subseteq \R^d$ is open and bounded.

Let $A_j \in \C^{k\times \ell}$ for $j\in \{1,\ldots, d\}$ where $k,\ell\in \N\setminus\{0\}$. Consider the differential expression
\begin{equation}\label{eq:DefinMathCalD}
    \mathcal{D}\coloneqq \sum_{j=1}^d \partial_j A_j,
\end{equation}
which formally acts as a first-order differential operator between vector fields with $\ell$ components and vector fields with $k$ components. We define 
\[
    D_c \colon\begin{cases}
\hfill \Cc(\Omega)^\ell \subseteq \Lp{2}(\Omega)^\ell &\to \Lp{2}(\Omega)^k\\
\hfill \phi&\mapsto \sum_{j=1}^d \partial_j A_j\phi
\end{cases}\text{.}
\]
Then, $D_c$ is a densely defined (linear) operator. Moreover, note that for $\psi\in  \Cc(\Omega)^k$ integration by parts for $\phi\in C_c^\infty(\Omega)^\ell$ shows that
\[
   \langle D_c \phi, \psi\rangle_{\Lp{2}(\Omega)^k} =    \langle \sum_{j=1}^d \partial_j A_j\phi , \psi\rangle_{\Lp{2}(\Omega)^k} = \langle \phi,  \sum_{j=1}^d -\partial_j A_j^* \psi\rangle_{\Lp{2}(\Omega)^\ell}.
\]
Thus, via writing
\[
    \mathcal{E}\coloneqq \sum_{j=1}^d \partial_j A^{\ast}_j
\]
for the negative formal adjoint differential operator of $\mathcal{D}$ acting between vector fields with $k$ components and vector fields with $\ell$ components, the adjoint of $D_c$ is a densely defined closed operator, which implies that $D_c$ is closable. Similarly, we introduce
\[
    E_c \colon\begin{cases}
\hfill \Cc(\Omega)^k \subseteq \Lp{2}(\Omega)^k &\to \Lp{2}(\Omega)^\ell\\
\hfill \phi&\mapsto \sum_{j=1}^d \partial_j A_j^*\phi
\end{cases}\text{.}
\]
Next, we define $D_0 \coloneqq \overline{D_c}$ (corresponding to $\mathcal{D}$ with homogeneous boundary conditions) as the operator closure of $D_c$, and
$D_0\subseteq D\coloneqq -E_c^\ast$ (corresponding to $\mathcal{D}$ on its maximal $\Lp{2}$-domain) as the densely defined and closed negative adjoint of $E_c$. Note that $\dom(D_0)$ and $\dom(D)$ endowed with the respective graph inner product are then, by standard arguments, Hilbert spaces.
\begin{example}\label{ex:int}
\begin{enumerate}
 \item[(a)] Let $k=d$, $\ell=1$, $A_j = (\delta_{jk})_{k\in \{1,\ldots,d\}}$. Then, $D_0 =\cgrad$ with $\dom(D_0)=\cH^1(\Omega)$ and $D=\grad$ with $\dom(D)=\sobH^1(\Omega)$.

\item[(b)] Let $d=k=\ell=3$, 
\[
A_1 = \begin{pmatrix}  0 & 0 & 0 \\ 0 & 0 & -1 \\ 0 & 1 & 0\end{pmatrix}, \ A_2 = \begin{pmatrix}  0 & 0 & 1 \\ 0 & 0 & 0 \\ -1 & 0 & 0\end{pmatrix}, \  A_3 = \begin{pmatrix}  0 & -1 & 0 \\ 1 & 0 & 0 \\ 0 & 0 & 0\end{pmatrix}.
\]
Then, $\ccurl\coloneqq D_0 \subseteq \curl=D$ with 
\[
\dom(D_0) =\{ E \in \dom(\curl); \exists (E_n)_{n\in\N} \text{ in }\Cc(\Omega)^3\colon E_n \to E,\ \curl E_n \to \curl E \}.
\]
\item[(c)] Define $\Lp{2,\sym}(\Omega)^{d \times d}\coloneqq \{ \Psi \in \Lp{2}(\Omega)^{d\times d}: \Psi =\Psi^\top\}$ and consider
\[
    \sym\cgrad \colon \begin{cases}
\hfill\cH^1(\Omega)^d \subseteq \Lp{2}(\Omega)^d &\to \Lp{2,\sym}(\Omega)^{d \times d}\\
\hfill \phi&\mapsto \frac12(\partial_j\phi_k+\partial_k\phi_j)_{j,k\in \{1,\ldots,d\}}
\end{cases}\text{,}
\]
as well as
\[
    \sym\grad \colon \begin{cases}
\hfill\sobH^1(\Omega)^d \subseteq \Lp{2}(\Omega)^d &\to \Lp{2,\sym}(\Omega)^{d \times d}\\
\hfill \phi&\mapsto \frac12(\partial_j\phi_k+\partial_k\phi_j)_{j,k\in \{1,\ldots,d\}}
\end{cases}\text{.}
\]
By Korn's first and second inequality $\sym\cgrad$ and $\sym\grad$, the \textbf{symmetrised gradient} with and without homogeneous boundary conditions, are densely defined, closed linear operators. For the second one, we require sufficient regularity of $\partial\Omega$, e.g., $\Omega$ being a strong Lipschitz domain (i.e., $\partial\Omega$ is locally a Lipschitz graph) is sufficient. Note that, by ignoring the entries in the lower left corner, $\Lp{2,\sym}(\Omega)^{d \times d}$ is isomorphic to $\Lp{2}(\Omega)^{d(d+1)/2}$ and, moreover, the symmetrised gradient (with and without homogeneous boundary conditions) can be rewritten as
\[
    \sym\grad \phi \sim\begin{pmatrix}\partial_1\phi_1&  \frac12(\partial_{2}\phi_1+\partial_{1}\phi_{2}) & \ldots & \frac12(\partial_{d}\phi_1+\partial_{1}\phi_{d}) & \partial_2\phi_2 & \ldots& \partial_{d}\phi_{d}\end{pmatrix}^\top.
\]
Thus, taking $\ell=d$, $k = d(d+1)/2$, appropriate choices for the $A_j$, and discussing the appearing $\dom(D)$ (see, e.g., \cite[Remark~2.4]{BD24}), we find that $\sym\cgrad$ and $\sym\grad$ are once more special cases of such $D_0$ and $D$ from above. For illustration, we provide the matrices in the case $d=3$:
\[
    A_1 = \begin{pmatrix} 1     &      0    &   0               \\
                                       0       & \frac12 &   0     \\
                                       0     &     0        & \frac12 \\        
                                       0    & 0         &     0     \\
                                        0    & 0         &     0     \\
                                         0    & 0         &     0      \end{pmatrix}, 
  A_2 = \begin{pmatrix} 0     &      0    &   0               \\
                                       \frac12       & 0 &   0     \\
                                       0     &     0        & 0 \\        
                                       0    & 1         &     0     \\
                                        0    & 0         &     \frac12      \\
                                         0    & 0         &     0      \end{pmatrix},
   A_3 = \begin{pmatrix} 0     &      0    &   0               \\
                                       0     & 0 &   0     \\
                                       \frac12      &     0        & 0\\        
                                       0    &  0         &     0     \\
                                        0    &  \frac12          &     0    \\
                                         0    & 0         &     1     \end{pmatrix}.                                         \qedhere
\]
\end{enumerate}
\end{example}

Our aim of this section is to establish a div-curl lemma type result; see, e.g., \cite[Chapter~7]{Ta09} or \cite{BCM09,W17_DCL,Pauly2019} for more information. The following arguments are less involved and are rooted in the following elementary fact.

\begin{lemma}[{{\cite[Lemma 14.4.6]{SeTrWa22}}}]\label{lem:dc0} Let $H$ be a Hilbert space, $(q_n)_{n\in\N}$, $(r_n)_{n\in\N}$ weakly convergent to $q$ and $r$ respectively in $H$, $X\subseteq H$ a closed subspace, and $\iota\colon X\hookrightarrow H$ the canonical embedding. If $q_n\in X$ for all $n\in\N$ and $\iota^* r_n\to \iota^* r$ strongly in $X$, then
\[
   \lim_{n\to\infty}\langle q_n, r_n\rangle_H = \langle q, r\rangle_H.
\]
\end{lemma}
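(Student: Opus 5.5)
Since $q_n\in X$ for every $n$ and $X$ is closed, hence weakly closed, the weak limit $q$ also lies in $X$. For elements of $X$, the inner product with $r$ only sees the projection of $r$ onto $X$. Writing $\iota\colon X\hookrightarrow H$, we have $\iota\iota^*=P_X$, the orthogonal projection onto $X$. So for all $n$,
\[
  \langle q_n, r_n\rangle_H=\langle \iota\iota^* q_n, r_n\rangle_H=\langle \iota^* q_n, \iota^* r_n\rangle_X ,
\]
and likewise $\langle q,r\rangle_H=\langle \iota^*q,\iota^*r\rangle_X$. This turns the statement into a pairing of a weakly convergent sequence with a strongly convergent one in the Hilbert space $X$.

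The sequence $(\iota^*q_n)_{n\in\N}$ converges weakly to $\iota^*q$ in $X$, because $\iota^*$ is bounded and linear and therefore weak-weak continuous. Weakly convergent sequences are bounded by the uniform boundedness principle, so $\sup_n\|\iota^*q_n\|_X\leq\sup_n\|q_n\|_H<\infty$.

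Next split
\[
  \langle \iota^* q_n, \iota^* r_n\rangle_X-\langle \iota^* q, \iota^* r\rangle_X
  =\langle \iota^* q_n, \iota^* r_n-\iota^* r\rangle_X+\langle \iota^* q_n-\iota^*q, \iota^* r\rangle_X .
\]
The first term is bounded by $\sup_k\|q_k\|_H\,\|\iota^*r_n-\iota^*r\|_X$, which tends to $0$ by the strong convergence hypothesis. The second term tends to $0$ by the weak convergence of $\iota^*q_n$ to $\iota^*q$ in $X$. Together these give the claim.

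There is no real obstacle here. The only point needing care is to use the identity $\iota\iota^*=P_X$ and the fact that $q_n\in X$, so that the unknown component of $r_n$ in $X^\perp$, which is only weakly convergent, never enters the pairing. Note that the weak convergence of $r_n$ is in fact only used to identify the limit $\iota^* r$ consistently.
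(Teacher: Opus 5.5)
Your proof is correct and follows the same route as the paper, which rests on the identity $\langle q_n, r_n\rangle_H=\langle q_n, \iota^\ast r_n\rangle_X$ (valid because $q_n\in X$) and then uses the standard weak--strong pairing argument that you spell out in detail. Your extra observations are also accurate: $q\in X$ follows from weak closedness, and the weak convergence of $(r_n)_{n\in\N}$ is essentially superfluous.
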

\begin{proof}
This is an immediate consequence of $\langle q_n, r_n\rangle_H=\langle q_n, \iota^\ast r_n\rangle_X$ for $n\in\N$.
\end{proof}
For the remaining section, we fix $\mathcal{D}$ and let $C$ be any closed operator with $D_0\subseteq C\subseteq D$.
To establish our div-curl version, we require another assumption on $C$:
\begin{enumerate}
 \item[\textbf{(com)}] $V_C\coloneqq\dom(C)\cap \ker(C)^\perp$ as a Hilbert space is compactly embedded into $\Lp{2}(\Omega)^\ell$.
\end{enumerate}
\begin{remark}\label{rem:CompactInvC}
Note that it is then elementary to show (see, e.g., the FA-toolbox in \cite{PZ20}), that $\ran(C)\subseteq L_2(\Omega)^k$ is closed (and thus weakly closed). Hence,
\[\iota_0^\ast \widetilde{C\kappa_0}\colon V_C\to \ran(C)\]
 is bounded and boundedly invertible, where $\iota_0 \colon  \ran(C)\hookrightarrow\Lp{2}(\Omega)^k$ and $\kappa_0 \colon \ker(C)^\bot \hookrightarrow \Lp{2}(\Omega)^\ell$ are the canonical embeddings and $\widetilde{C\kappa_0}\in\Lb(V_C, \Lp{2}(\Omega)^k)$ is defined as $\widetilde{C\kappa_0}v\coloneqq Cv$ for $v\in V_C$. In particular, the
 inverse of
 \[\iota_0^\ast C\kappa_0\colon \dom(C)\cap \ker(C)^\perp\subseteq \Lp{2}(\Omega)^\ell\to \ran(C),\]
  admitting values in $\dom(C)\cap\ker(C)^\perp\subseteq \Lp{2}(\Omega)^\ell$, is a compact operator.
\end{remark}
We will use the notation from \Cref{rem:CompactInvC} in the remaining paper.
We now review the operators from \Cref{ex:int}.
\begin{example}\label{ex:comp}
\begin{enumerate}
 \item[(a)] If $\cgrad\subseteq C\subseteq \grad$ and $\Omega$ satisfies the cone condition, condition (com) is implied by Rellich's selection theorem. Note that we do not need the cone condition in the case $C=\cgrad$.

\item[(b)] For $d=3$, if either $\ccurl=C$ or $C=\curl$, and if $\Omega$ additionally is a weak Lipschitz domain, (com) is established by the Picard--Weber--Weck selection theorem, see, e.g., \cite{Pi84}. Other realisations $\ccurl\subseteq C\subseteq \curl$ satisfy (com), if $C$ carries mixed Neumann--Dirichlet boundary conditions and $\Omega$ is weak Lipschitz, see \cite{BPS19}. We refer to \cite{PS23} for a related result for strong Lipschitz domains.

\item[(c)] For $\sym\cgrad\subseteq C\subseteq\sym\grad$ and $\Omega$ being a strong Lipschitz domain to allow for Korn's second inequality, (com) again follows from Rellich's selection theorem. There is no regularity requirement for $\Omega$, if $C=\sym\cgrad$.\qedhere
\end{enumerate}
\end{example}

A generalisation of \cite[Theorem 14.4.7]{SeTrWa22} now reads as follows.
\begin{theorem}\label{lem:dceasy} Assume condition (com). Let $(q_n)_{n\in\N}, (r_n)_{n\in\N}$ in $\Lp{2}(\Omega)^k$ be weakly convergent to $q$ and $r$ respectively. Further, assume that 
\[
    \iota_0^* r_n\to \iota_0^* r
\] strongly in $\ran(C)$ as $n\to\infty$.

If $q_n\in {\ran}(C)$ for all $n\in \N$, then
\begin{equation}\label{eq:conv}
    \forall\phi\in \Cc(\Omega):\int_\Omega \langle r_n(x),q_n(x)\rangle_{\C^k} \phi(x) \mathrm{d}x \to     \int_\Omega \langle r(x),q(x)\rangle_{\C^k} \phi(x) \mathrm{d}x\text{.}
\end{equation}
\end{theorem}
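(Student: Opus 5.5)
The plan is to represent $q_n$ through potentials in $V_C$, use (com) to upgrade their weak convergence to strong convergence, and move the test function $\phi$ inside $C$ at the cost of a lower-order commutator term. Concretely, since $q_n\in\ran(C)$ and $\iota_0^*\widetilde{C\kappa_0}\colon V_C\to\ran(C)$ is boundedly invertible (\Cref{rem:CompactInvC}), I write $q_n = Cv_n$ with $v_n\in V_C$ and $\|v_n\|_{V_C}\lesssim \|q_n\|$. Weakly convergent sequences are bounded, so $(v_n)_n$ is bounded in $V_C$. By (com), every subsequence has a further subsequence converging weakly in $V_C$ and strongly in $\Lp{2}(\Omega)^\ell$ to some $v$. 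Closedness of $C$ gives $Cv=q$. Since $\ran(C)$ is weakly closed, $q\in\ran(C)$, so $v$ is uniquely determined as $(\iota_0^*\widetilde{C\kappa_0})^{-1}q$. Hence the whole sequence satisfies $v_n\to v$ strongly in $\Lp{2}(\Omega)^\ell$.

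The key algebraic step is a product rule. For $\phi\in\Cc(\Omega)$ and $w\in\dom(D)$, I claim that $\phi w\in\dom(D_0)\subseteq\dom(C)$ and
\[
  C(\phi w)=D(\phi w)=\phi Dw+\sum_{j=1}^d(\partial_j\phi)A_jw .
\]
To see that $\phi w\in\dom(D)$ with this formula, I test against $\psi\in\Cc(\Omega)^k$. Using $E_c(\overline{\phi}\psi)=\overline{\phi}E_c\psi+\sum_j(\partial_j\overline{\phi})A_j^*\psi$ and the definition $D=-E_c^*$, one gets $\langle \phi w, E_c\psi\rangle = \langle -\phi Dw - \sum_j(\partial_j\phi)A_jw,\psi\rangle$. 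To upgrade to $\dom(D_0)$, I use that $\phi w$ has compact support in $\Omega$: mollifying it produces $\Cc(\Omega)^\ell$ functions. Because $\mathcal{D}$ has constant coefficients, mollification commutes exactly with $\mathcal{D}$, so the mollified functions converge to $\phi w$ in the graph norm of $D$. Hence $\phi w\in\dom(D_0)$.

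Now I compute, with the inner product linear in the second slot,
\[
 \int_\Omega\langle r_n,q_n\rangle\phi=\langle r_n,\phi q_n\rangle_{\Lp{2}}
 =\langle r_n, C(\phi v_n)\rangle-\Big\langle r_n,\sum_j(\partial_j\phi)A_jv_n\Big\rangle .
\]
The second term converges to the corresponding term with $r$ and $v$, since $r_n$ converges weakly and $\sum_j(\partial_j\phi)A_jv_n$ converges strongly. For the first term, $C(\phi v_n)\in\ran(C)$, so it equals $\langle \iota_0^*r_n,\iota_0^*C(\phi v_n)\rangle_{\ran(C)}$. Moreover $C(\phi v_n)=\phi q_n+\sum_j(\partial_j\phi)A_jv_n$ converges weakly to $C(\phi v)$. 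Together with the assumed strong convergence $\iota_0^* r_n\to\iota_0^* r$, \Cref{lem:dc0} gives convergence to $\langle r, C(\phi v)\rangle$. Reassembling with $Cv=q$ yields \eqref{eq:conv}.

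The main obstacle I expect is the product-rule and localisation step, i.e.\ showing $\phi w\in\dom(D_0)$ for $w\in\dom(D)$ without any regularity assumption on $\Omega$. This is what allows the argument to work for any intermediate realisation $C$. The Friedrichs mollifier argument should handle it, precisely because the coefficients $A_j$ are constant and the support of $\phi w$ stays a positive distance away from $\partial\Omega$.
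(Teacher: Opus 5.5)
Your proposal is correct and follows the paper's proof essentially step for step. It uses the potentials $v_n=(\iota_0^\ast C\kappa_0)^{-1}q_n$, which converge strongly in $\Lp{2}(\Omega)^\ell$ by (com), then the product rule $\phi q_n = C(\phi v_n)-\sum_j(\partial_j\phi)A_jv_n$, and finally \Cref{lem:dc0} for the principal term. Your mollifier argument showing $\phi w\in\dom(D_0)\subseteq\dom(C)$ for $w\in\dom(D)$ makes explicit a step that the paper uses without justification.
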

\begin{proof}
Define $v_n\coloneqq (\iota_0^\ast C\kappa_0 )^{-1}q_n \in \dom(C)\cap \ker(C)^\perp\subseteq\Lp{2}(\Omega)^\ell$ for $n\in\N$, where $\kappa_0 \colon \ker(C)^\bot \hookrightarrow \Lp{2}(\Omega)^\ell$ is the canonical embedding. By \Cref{rem:CompactInvC}, $v_n\to v\coloneqq (\iota_0^\ast C\kappa_0 )^{-1}q$ strongly in $\ker(C)^\perp\subseteq\Lp{2}(\Omega)^\ell$ and weakly in $\dom(C)$. Thus, we compute
 \begin{align*}
 \int_\Omega \langle r_n(x),q_n(x)\rangle_{\C^k} \phi(x) \mathrm{d} x & = \langle r_n, \phi q_n\rangle_{\Lp{2}(\Omega)^k} \\
 & = \langle r_n, \phi C v_n\rangle_{\Lp{2}(\Omega)^k} \\
 & = \langle r_n, C (\phi v_n)\rangle_{\Lp{2}(\Omega)^k} -    \langle r_n,  \sum_{j=1}^d (\partial_j \phi) A_j  v_n\rangle_{\Lp{2}(\Omega)^k} \\
 & \to \langle r, C (\phi v)\rangle_{\Lp{2}(\Omega)^k} -  \langle r,  \sum_{j=1}^d (\partial_j \phi) A_j  v\rangle_{\Lp{2}(\Omega)^k}\\
 & =  \langle r, \phi q\rangle_{\Lp{2}(\Omega)^k}= \int_\Omega \langle r(x),q(x)\rangle_{\C^k} \phi(x) \mathrm{d}x,
 \end{align*}
 where we used $C (\phi v_n)\rightharpoonup C (\phi v)$ weakly in $\Lp{2}(\Omega)^k$ and \Cref{lem:dc0} for the limit.
\end{proof}

\section{$\Htopo$-Convergence for $C$}\label{sec:HConForC}

In this section, we introduce a notion of (local) $\Htopo$-convergence for the setting introduced in the previous section, i.e., we will once again assume that $\Omega\subseteq \R^d$ is open and bounded, we will fix some $\mathcal{D}$ as in~\eqref{eq:DefinMathCalD}, some closed $D_0\subseteq C\subseteq D$, and we will assume (com) throughout the section.

First, we recall the following well-posedness theorem from \cite{TrWa14}. Note that the theorem as presented in the following can also be proven with an appropriate version of the Lax--Milgram lemma. For that matter, we are first required to recall a standard result concerning the adjoint of $C$. Again, this can be found in, e.g., \cite{TrWa14} or as part of the results in the FA-toolbox in \cite{PZ20} (see also \cite{BW26} for some more specific details).
 We define the antilinear and bounded
 \[
 \bigl(\widetilde{C\kappa_0}\bigr)^\diamond \colon
 \begin{cases}
\hfill\Lp{2}(\Omega)^k &\to  V_C^\prime\\
\hfill q&\mapsto \Bigl(V_C\ni \phi \mapsto \langle q,C \phi\rangle_{\Lp{2}(\Omega)^k}\Bigr)
\end{cases}\text{.}
\]
for $q\in \Lp{2}(\Omega)^k$.
Then, for $a\in \Lb(\Lp{2}(\Omega)^k)$, $u\in\dom(C)$, and $f\in V_C^\prime$, we have
\begin{equation}\label{eq:ellipticWPThmProof}
   \Big(\forall\phi\in V_C: \langle a Cu , C \phi\rangle_{L_2(\Omega)^k} = f(\phi)\Big) \iff \bigl(\widetilde{C\kappa_0} \bigr)^\diamond a C u = f.
\end{equation}
Using $\Phi(\widetilde{C\kappa_0})^\diamond= (\widetilde{C\kappa_0})^\ast$ (see \cite[Proposition 1.2.3]{BW26}) with the usual antilinear Riesz isomorphism $\Phi\colon V_C^\prime\to V_C$, we easily see
\[\ran(C) = \ker\bigl( \bigl(\widetilde{C\kappa_0}\bigr)^\diamond\bigr)^\perp\text{ and }\ran \bigl( \bigl(\widetilde{C\kappa_0}\bigr)^\diamond\bigr)=V_C^\prime\text{.}\]
Thus, we obtain that
\[
\bigl(\widetilde{C\kappa_0} \bigr)^\diamond\iota_0\colon\ran(C)\to V_C^\prime
\] has an antilinear and bounded inverse.
Hence, we can continue~\eqref{eq:ellipticWPThmProof}
\[
\bigl(\widetilde{C\kappa_0} \bigr)^\diamond a C u = f \iff \bigl(\widetilde{C\kappa_0} \bigr)^\diamond \iota_0\iota_0^\ast a C u = f 
\iff  \iota_0^* a \iota_0\iota_0^\ast C u = \bigl(\bigl(\widetilde{C\kappa_0} \bigr)^\diamond\iota_0\bigr)^{-1} f\text{,}
\]
which altogether yields the following result, which also introduces the \textbf{Lax--Milgram operator}, $\operatorname{LM}_{C}(a)$, associated to an abstract divergence form problem involving the operator $C$ and (operator) coefficient $a$.
\begin{theorem}[{{\cite[Theorem 3.1]{TrWa14}}}]\label{thm:wp}
Let $a\in \Lb(\Lp{2}(\Omega)^k)$ with $\iota_0^\ast a\iota_0$ boundedly invertible.

If $f\in V_C^\prime$, then there exists a uniquely determined $u\in V_C$ such that
\begin{equation}\label{eq:AbsElliPDE}
    \forall \phi\in V_C:\langle a Cu, C\phi\rangle_{\Lp{2}(\Omega)^k} = f(\phi).
 \end{equation}
To be precise, we have $u= \operatorname{LM}_{C}(a) f$, where
\[\Lb(V_C^\prime,V_C)\ni \operatorname{LM}_{C}(a)\coloneqq (\iota_0^\ast \widetilde{C\kappa_0})^{-1}(\iota_0^\ast a\iota_0)^{-1}\bigl(\bigl(\widetilde{C\kappa_0} \bigr)^\diamond\iota_0\bigr)^{-1}\text{,}\]
where
\[\iota_0^\ast \widetilde{C\kappa_0} \colon V_C\to \ran(C)\quad\text{ and }\quad\bigl(\widetilde{C\kappa_0} \bigr)^\diamond\iota_0\colon\ran(C)\to V_C^\prime
\]
are (anti-)linear,
bounded, and have (anti-)linear and bounded inverses.
\end{theorem}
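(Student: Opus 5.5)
The plan is to reduce \eqref{eq:AbsElliPDE} to an equation in $\ran(C)$ and then invert three operators in turn, as in the equivalences displayed just before the statement. First I collect the mapping properties of the building blocks. By (com) and \Cref{rem:CompactInvC}, $\ran(C)$ is closed and $\iota_0^\ast\widetilde{C\kappa_0}\colon V_C\to\ran(C)$ is bounded with bounded inverse. Boundedness is immediate from the graph norm. Injectivity holds because $V_C\cap\ker(C)=\{0\}$. Surjectivity holds because $\ran(C)=C[\dom(C)\cap\ker(C)^\perp]$, and the inverse is then bounded by the open mapping theorem. For $(\widetilde{C\kappa_0})^\diamond\iota_0$, I use $\Phi(\widetilde{C\kappa_0})^\diamond=(\widetilde{C\kappa_0})^\ast$ from \cite[Proposition 1.2.3]{BW26}. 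Since $\widetilde{C\kappa_0}$ is injective with closed range $\ran(C)$, its Hilbert adjoint maps $\ran(C)=\ker((\widetilde{C\kappa_0})^\ast)^\perp$ bijectively onto $V_C$. Composing with the antilinear Riesz isomorphism $\Phi^{-1}$ shows that $(\widetilde{C\kappa_0})^\diamond\iota_0\colon\ran(C)\to V_C'$ is an antilinear bounded bijection, with bounded inverse by the open mapping theorem.

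Next I rewrite the variational problem. For $u\in V_C$ we have $Cu\in\ran(C)$, so $Cu=\iota_0\iota_0^\ast Cu$. Moreover $(\widetilde{C\kappa_0})^\diamond$ annihilates $\ran(C)^\perp$, which gives $(\widetilde{C\kappa_0})^\diamond q=(\widetilde{C\kappa_0})^\diamond\iota_0\iota_0^\ast q$ for all $q$. By \eqref{eq:ellipticWPThmProof}, \eqref{eq:AbsElliPDE} is therefore equivalent to
\[
 (\iota_0^\ast a\iota_0)(\iota_0^\ast\widetilde{C\kappa_0})u=\bigl((\widetilde{C\kappa_0})^\diamond\iota_0\bigr)^{-1}f .
\]
Using the assumed bounded invertibility of $\iota_0^\ast a\iota_0$ and the invertibility of $\iota_0^\ast\widetilde{C\kappa_0}$, this equation holds if and only if $u=\operatorname{LM}_C(a)f$. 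This single equivalence gives existence and uniqueness at once.

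Finally, $\operatorname{LM}_C(a)$ is a composition of two antilinear maps and one linear map, so it is linear. It is bounded, so $\operatorname{LM}_C(a)\in\Lb(V_C',V_C)$. The only delicate point is the identification of the range and kernel of $(\widetilde{C\kappa_0})^\diamond$, namely that $\ker((\widetilde{C\kappa_0})^\diamond)=\ran(C)^\perp$ and that the map is onto $V_C'$. This relies on the closed range of $C$, which is exactly where (com) enters.
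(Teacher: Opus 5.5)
Your argument is correct and is essentially the paper's own: the derivation preceding the statement likewise reads \eqref{eq:AbsElliPDE} as $(\widetilde{C\kappa_0})^\diamond aCu=f$ and inserts $\iota_0\iota_0^\ast$. It inverts $(\widetilde{C\kappa_0})^\diamond\iota_0$ via $\Phi(\widetilde{C\kappa_0})^\diamond=(\widetilde{C\kappa_0})^\ast$ and $\iota_0^\ast\widetilde{C\kappa_0}$ via \Cref{rem:CompactInvC}. You only spell out the closed-range and open-mapping details that the paper calls easy.

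There is one slip in your last paragraph. The three factors of $\operatorname{LM}_C(a)$ are two linear maps, $(\iota_0^\ast\widetilde{C\kappa_0})^{-1}$ and $(\iota_0^\ast a\iota_0)^{-1}$, and one antilinear map, $((\widetilde{C\kappa_0})^\diamond\iota_0)^{-1}$. Hence $\operatorname{LM}_C(a)$ is antilinear, not linear. This is as it must be: $u$ sits in the conjugate-linear slot of the inner product, so replacing $f$ by $\lambda f$ replaces $u$ by $\overline{\lambda}u$. The membership $\operatorname{LM}_C(a)\in\Lb(V_C',V_C)$ in the statement should therefore be read as ``bounded antilinear''; this affects nothing else in your proof.
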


In the following we will simply write $V$ instead of $V_{C}$.

We are now in the position to introduce $\Htopo$-convergence in the present setting. For $0<\alpha\leq\beta$, define
\[
   M(\alpha,\beta;C)\coloneqq \{a\in \Lp{\infty}(\Omega)^{k\times k}: \Re a\geq \alpha, \Re a^{-1}\geq 1/\beta\}.
\]
Let $(a_n)_{n\in\N}$, $a$ in $M(\alpha,\beta;C)$. Then, $a_n$ \textbf{$\Htopo$-converges}  to $a$, if and only if the following is satisfied:

For all $f\in V'$ and $n\in \N$, let $u_n \in V$ be the unique solution of
\[
    \forall \phi\in V:\langle a_n Cu_n, C \phi\rangle_{\Lp{2}(\Omega)^k} = f(\phi).
    \]
    Then $u_n\rightharpoonup u$ weakly in $V$ and $a_n C u_n \rightharpoonup aC u$ weakly in $\Lp{2}(\Omega)^k$, where $u\in V$ satisfies
    \[
    \forall \phi\in V:\langle a C u , C \phi\rangle_{\Lp{2}(\Omega)^k} = f(\phi).
    \]
  Before we discuss the main properties of $\Htopo$-convergence, we need to impose a condition on $\mathcal{D}$.
Namely, we assume a non-degeneracy condition of the following form:
\begin{enumerate}
 \item[\textbf{(nd)}] $\spn_{j\in \{1,\ldots,d\}} \ran A_j = \R^k$.
\end{enumerate}
We revisit \Cref{ex:int} concerning the condition (nd).
\begin{example}\label{exa:nd}
\begin{enumerate}
\item[(a)] The condition (nd) is trivially satisfied for $D=\grad$.
\item[(b)] For $d=3$ and $D=\curl$, (nd) also holds. In fact, already $\spn \ran A_1\cup \ran A_2 = \R^3$.
\item[(c)] For $D=\sym\grad$, (nd) also holds. In the treated example case $d=3$, all three matrices are needed to obtain (nd).\qedhere
\end{enumerate}
\end{example}
We quickly mention the relevant consequence of condition (nd).
\begin{lemma}\label{lem:localvectors} Assume (nd).
 Let $\xi\in \CC^k$, $\omega\subseteq \Omega$ open with $\overline{\omega}\subseteq \Omega$. Then, there exists $u\in \Cc(\Omega)^\ell$ such that $C u = \xi$ on $\omega$.
\end{lemma}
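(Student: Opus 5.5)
Since $\Cc(\Omega)^\ell\subseteq\dom(D_0)\subseteq\dom(C)$ and $C$ acts on smooth compactly supported functions as the differential expression $\mathcal{D}$, it suffices to construct $u\in\Cc(\Omega)^\ell$ with $\mathcal{D}u=\sum_{j=1}^d A_j\partial_j u=\xi$ on $\omega$. The plan is to use linear test functions $x\mapsto x_j\eta$ multiplied by a cutoff that equals one on $\omega$.

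First, fix a cutoff $\chi\in\Cc(\Omega)$ with $\chi=1$ on a neighbourhood of $\overline{\omega}$. It exists since $\overline{\omega}$ is compact (being closed and bounded, as $\Omega$ is bounded) and contained in the open set $\Omega$.

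Second, use (nd) to write $\xi=\sum_{j=1}^d A_j\eta_j$ for suitable $\eta_j\in\C^\ell$. Condition (nd) is stated with $\R^k$, but the span of the ranges of the $A_j$, taken over $\C$ (the $A_j$ are complex matrices acting on $\C^\ell$), is a complex subspace of $\C^k$. It contains $\R^k$ and hence all of $\C^k$, so $\sum_j\ran A_j=\C^k$ and such $\eta_j$ exist.

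Third, set $u(x)\coloneqq\chi(x)\sum_{j=1}^d x_j\eta_j$. Then $u\in\Cc(\Omega)^\ell$. On $\omega$ we have $\chi=1$ and all derivatives of $\chi$ vanish there, so $\partial_m u=\eta_m$ on $\omega$. Hence
\[
Cu=D_c u=\sum_{m=1}^d A_m\partial_m u=\sum_{m=1}^d A_m\eta_m=\xi\quad\text{on }\omega.
\]

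There is no real obstacle. The only points needing care are interpreting (nd) over $\C$ and noting that $C$ extends $D_c$, so that $Cu=D_cu$ for $u\in\Cc(\Omega)^\ell$.
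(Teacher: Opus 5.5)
Your proof is correct and is essentially the paper's argument. The paper likewise uses (nd) to write $\xi=\sum_j A_j r_j$ and takes $u=\sum_j\phi_j$, where each $\phi_j\in\Cc(\Omega)^\ell$ equals $x\mapsto x_j r_j$ on $\omega$; this is exactly your cutoff construction. You also make explicit two points the paper leaves implicit: (nd) must be read over $\C$, so that the ranges span $\C^k$, and $Cu=D_cu$ for test functions.
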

\begin{proof}
By (nd), there exist $\alpha_1,\ldots,\alpha_d \in \CC$ and $q_1,\ldots, q_d \in \CC^k$ such that $q_j = A_j r_j$ for some $r_j \in \CC^\ell$ for all $j\in \{1,\ldots,d\}$ and $\xi = \sum_{j=1}^d \alpha_j q_j$. By appropriately scaling $r_j$, we may assume, without loss of generality, that $\alpha_j =1$ for all $j\in \{1,\ldots,d\}$. For all $j\in \{1,\ldots,d\}$, we find $\phi_j \in \Cc(\Omega)^\ell$ such that, on $\omega$, we have $\phi_j(x)=x_j r_j$. It follows that $u\coloneqq \sum_{j=1}^d \phi_j$ has the desired properties.
\end{proof}
  
  We now get to the main structural theorem regarding $\Htopo$-convergence. In the proof, for the most parts, we will mainly follow and adapt the arguments in \cite[page~82 and Theorem~6.5]{Ta09}. The concluding part, however, where in \cite{Ta09} the density of piecewise affine functions in $\sobH^1$ is used, we deviate significantly from the proposed proof. In fact, by not just assuming piece-wise affine vector fields to be dense, we obtain a wider applicability of our result, see \Cref{rem:ProofWIthFEMPropAP} below.
\begin{theorem}\label{thm:Hcom} Assume conditions (com) and (nd). Then $\Htopo$-convergence induces a metric topology on $ M(\alpha,\beta;C)$ denoted by $\tau_{\Htopo}$. $(M(\alpha,\beta;C),\tau_{\Htopo})$ is even (sequentially) compact.
\end{theorem}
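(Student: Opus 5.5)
The plan follows Tartar's scheme, adapted to the abstract operator $C$. It has three parts: uniqueness of limits (which gives a Hausdorff topology), metrisability, and sequential compactness.

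\emph{Uniqueness of the limit.} Suppose $a_n\to a$ and $a_n\to b$ in the $\Htopo$-sense. Then for every $f\in V'$ the solutions satisfy $aCu=bCu$, because the weak limit $u$ is the same and $a_nCu_n$ has a single weak limit. By \Cref{thm:wp}, $f\mapsto Cu$ ranges over all of $\ran(C)$. Hence $a=b$ on $\ran(C)$, and by linearity also on $\phi\,\ran(C)$ for $\phi\in\Cc(\Omega)$ after localising; the localisation is justified via \Cref{thm:dceasy} below. To pass to all of $\Lp2(\Omega)^k$, I use that $a,b$ are multiplication operators. By \Cref{lem:localvectors}, for each $\xi\in\C^k$ and each $\omega\Subset\Omega$ there is $w\in\Cc(\Omega)^\ell\subseteq\dom(D_0)\subseteq\dom(C)$ with $Cw=\xi$ on $\omega$. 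Projecting onto $\ker(C)^\perp$ does not change $Cw$, so $Cw\in\ran(C)$. Therefore $(a-b)\xi=0$ a.e.\ on $\omega$ for every $\xi$, and $a=b$.

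\emph{Metrisability.} I would define the topology as the initial topology of the maps $a\mapsto \operatorname{LM}_C(a)f$ (weakly in $V$) and $a\mapsto aC\operatorname{LM}_C(a)f$ (weakly in $\Lp2$), with $f$ ranging over a countable dense subset of the separable space $V'$. The operators $\operatorname{LM}_C(a)$ are uniformly bounded by a constant depending only on $\alpha$, and $\|a\|\le\beta$. So on the uniformly bounded set all these maps take values in balls, where the weak topology is metrisable, and a countable family suffices. Moreover, sequential convergence in this topology coincides with $\Htopo$-convergence by density of the countable subset and the uniform bounds.

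\emph{Compactness.} Take a sequence $(a_n)$ in $M(\alpha,\beta;C)$ and a countable dense set $\{f_m\}\subseteq V'$. By boundedness and a diagonal argument, pass to a subsequence with $u_n^m\rightharpoonup u^m$ and $a_nCu_n^m\rightharpoonup \sigma^m$. The maps $f\mapsto u$ and $f\mapsto\sigma$ extend to bounded linear maps on all of $V'$, and $\sigma$ satisfies $\langle\sigma,C\phi\rangle=f(\phi)$. The core step is to construct a multiplication operator $a\in M(\alpha,\beta;C)$ with $\sigma=aCu$. For this I use the adjoint problems: let $v_n\in V$ solve $\langle a_n^*Cv_n,C\psi\rangle=g(\psi)$, again passing to a subsequence. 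For fixed $\xi\in\C^k$ and $\omega\Subset\Omega$, take $w$ from \Cref{lem:localvectors}. I then build oscillating test fields $w_n$ with $Cw_n\rightharpoonup Cw$, with $a_n^*Cw_n$ weakly convergent, and with $\iota_0^*(a_n^*Cw_n)$ converging strongly in $\ran(C)$. Concretely, $w_n=\operatorname{LM}_C(a_n^*)g$ with $g=(\widetilde{C\kappa_0})^\diamond a_n^*Cw$. Since $g$ depends on $n$, I instead fix $g$ via the weak limit of the data, using that $(\widetilde{C\kappa_0})^\diamond\iota_0$ is an isomorphism.

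With these fields I apply \Cref{thm:dceasy} to the products $\langle a_nCu_n,Cw_n\rangle=\langle Cu_n,a_n^*Cw_n\rangle$. This identifies the local limits of both sides, giving pointwise on $\omega$ the relation $\langle\sigma,\xi\rangle=\langle Cu,\eta_\xi\rangle$, where $\eta_\xi$ is the weak limit of $a_n^*Cw_n$. Defining $a^*\xi:=\eta_\xi$ on $\omega$ is linear in $\xi$, and it is consistent across $\omega$ by construction. This yields $\sigma=aCu$ with $a\in\Lp\infty$.

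The bounds $\Re a\ge\alpha$ and $\Re a^{-1}\ge1/\beta$ follow by passing to the limit, again via \Cref{thm:dceasy}, in $\Re\langle a_nCw_n,Cw_n\rangle\phi\ge\alpha|Cw_n|^2\phi$ and in the analogous inequality for $a_n^{-1}$, combined with weak lower semicontinuity. Finally, $u$ solves the limit problem with coefficient $a$, and by uniqueness from \Cref{thm:wp} the whole subsequence $\Htopo$-converges.

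\emph{Main obstacle.} I expect the hardest step to be constructing $w_n$ with $\iota_0^*a_n^*Cw_n$ strongly convergent while $Cw_n$ equals $\xi$ asymptotically near $\omega$. This is where Tartar uses piecewise affine densities. I would get it from the identity $\iota_0^*a_n^*Cw_n=\bigl((\widetilde{C\kappa_0})^\diamond\iota_0\bigr)^{-1}g$, holding with a fixed $g$, so the strong convergence is trivial. I would then show that $Cw_n-Cw$ converges weakly to an element whose pairing vanishes locally, using the compactness assumption (com).
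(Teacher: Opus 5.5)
Your uniqueness and metrisability arguments are fine and agree with the paper's. The appeal to \Cref{lem:dceasy} for ``localising'' is unnecessary, since multiplication operators commute with $\phi$.

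The gap is at the step you flag as the main obstacle. You take $g$ to be the weak limit of the data $(\widetilde{C\kappa_0})^\diamond a_n^\ast Cw$ and set $w_n\coloneqq\operatorname{LM}_C(a_n^\ast)g$. This does not give $Cw_n\rightharpoonup Cw$, because the weak limit of $w_n$ is governed by the limit of the solution operators $\operatorname{LM}_C(a_n^\ast)$, not by the weak-$\ast$ limit of the coefficients. A concrete counterexample: $d=1$, $\Omega=(0,1)$, $C=\cgrad$, and $a_n(x)=a(nx)$ with $a$ real, positive, $1$-periodic and non-constant. Your recipe gives $w_n'\rightharpoonup(\bar a/\underline a)\,w'$, where $\bar a$ and $\underline a$ are the arithmetic and harmonic means of $a$. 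Since $\bar a>\underline a$, the limit is not $\xi$ on $\omega$. Condition (com) cannot repair this. Write $r$ for the weak limit of $\iota_0^\ast a_n^\ast Cw$. Then $Cw_n-Cw=\iota_0(\iota_0^\ast a_n^\ast\iota_0)^{-1}(r-\iota_0^\ast a_n^\ast Cw)$. This is a weak-operator convergent sequence applied to a weakly null sequence, and it has no reason to tend to $0$.

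The missing idea is to choose $g$ by inverting the limit of the adjoint solution operators.
\begin{itemize}
\item Since $(\iota_0^\ast a_n^\ast\iota_0)^{-1}=\bigl((\iota_0^\ast a_n\iota_0)^{-1}\bigr)^\ast$, the formula in \Cref{thm:wp} shows that $\operatorname{LM}_C(a_n^\ast)$ converges in the weak operator topology along your subsequence to some $B_0^\dagger$.
\item $B_0^\dagger$ is boundedly invertible by the uniform bounds; this is the argument of the footnote in the paper's proof.
\item Set $g\coloneqq(B_0^\dagger)^{-1}\kappa_0\kappa_0^\ast w$. Then $w_n\rightharpoonup\kappa_0\kappa_0^\ast w$ in $V$, hence $Cw_n\rightharpoonup Cw$.
\item At the same time $\iota_0^\ast a_n^\ast Cw_n=\bigl((\widetilde{C\kappa_0})^\diamond\iota_0\bigr)^{-1}g$ is constant in $n$. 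So (com) enters only through \Cref{lem:dceasy}.
\end{itemize}
You also need $w_\xi$ to be linear in $\xi$, and a diagonal argument over an exhaustion, so that one subsequence serves all $\xi$ and all $\omega$. The $\Lp{\infty}$-bound on $a$ comes from your final step, $\Re\langle\eta_\xi,\xi\rangle\geq\frac1\beta\lvert\eta_\xi\rvert^2$, not from the definition of $a$.

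Once repaired, your route is the classical Murat--Tartar oscillating-test-function argument, and it genuinely differs from the paper's. The paper uses neither adjoint problems nor \Cref{lem:dceasy} in this proof. Instead it passes to the limit in the local energy $\langle a_nCu_n,\phi Cu_n\rangle$ using the product rule, the equation and (com). From this it derives $\lvert Bv\rvert\leq\beta\lvert Cv\rvert$ a.e., and then builds $a(t)$ by a $\Q+\iu\Q$-linear Lipschitz extension. Your version obtains $a^\ast e_j$ directly as weak limits of fluxes. The price is the adjoint sequence and an extra diagonalisation.
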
 

\begin{proof}
A moment's reflection reveals that $\tau_{\Htopo}$ is the initial topology with respect to the mappings
    \begin{equation}\label{eq:LM-reform}
    \begin{aligned}
    M(\alpha,\beta;C) \ni a&\mapsto \operatorname{LM}_{C}(a) \in \Lb(V^\prime,V) \text{ and}\\
    M(\alpha,\beta;C) \ni a&\mapsto a\widetilde{C\kappa_0} \operatorname{LM}_{C}(a) \in \Lb(V^\prime,\Lp{2}(\Omega)^k),
    \end{aligned}
    \end{equation} where both $\Lb(V^\prime,V)$ and $\Lb(V^\prime,\Lp{2}(\Omega)^k)$ are endowed with the weak operator topology.
Since $V^\prime$ is separable\footnote{Simply isometrically embed $V$ into
$\Lp{2}(\Omega)^\ell\times \Lp{2}(\Omega)^k$.}, it suffices to only consider a countable orthonormal basis of $V^\prime$ as choices for $f$.
These correspond to countably many bounded subsets of $V$ and $\Lp{2}(\Omega)^k$ endowed with the respective weak topology, i.e., countably many metric spaces. Thus, $\tau_{\Htopo}$ is induced by a pseudometric. It remains to show that limits of sequences are unique. This immediately follows from \Cref{lem:localvectors}.

 For a given sequence $(a_n)_{n\in\N}$, the Banach--Alaoglu theorem yields a subsequence (that we will still call $(a_n)_{n\in\N}$), $B_0\in \Lb(V^\prime,V)$, and $B_1\in \Lb(V^\prime,\Lp{2}(\Omega)^k)$ such that
  \[\operatorname{LM}_{C}(a_n)\to B_0\quad\text{ and }\quad a_n \widetilde{C\kappa_0} \operatorname{LM}_{C}(a_n)\to B_1\] in the respective weak operator topology. Since $B_0$ is boundedly invertible\footnote{Let $\mathcal{H}$ be a Hilbert space, $c,d>0$, $S,R_n\in\Lb(\mathcal{H})$ with $\Re R_n\geq c$ and $\Re R_n^{-1}\geq d$ for $n\in\N$, and $R_n^{-1}\to S$ in the weak operator topology. Then, easy calculations show $\Re S\geq d$ and $\Re S^{-1}\geq c$.}, we infer
 $B_1 = B_1B_0^{-1}B_0$ and define $B\coloneqq B_1 B_0^{-1}\in \Lb(V,\Lp{2}(\Omega)^k)$.
 This implies 
 \begin{equation}\label{eq:WOTLimitOfFluxesBB0}
 a_n \widetilde{C\kappa_0} \operatorname{LM}_{C}(a_n)\to BB_0
 \end{equation}
 in the weak operator topology and thus
\begin{equation}\label{eq:WeakFormBB0C}
\forall \phi\in V:\langle BB_0 f , C \phi\rangle_{\Lp{2}(\Omega)^k} = f(\phi)
 \end{equation}
for each $f\in V^\prime$.
It remains to show that there is $a\in M(\alpha,\beta;C)$ with $B=a\widetilde{C\kappa_0}$: Indeed, this turns~\eqref{eq:WeakFormBB0C} into
\[\forall \phi\in V:\langle a \widetilde{C\kappa_0}  B_0 f , C \phi\rangle_{\Lp{2}(\Omega)^k} = f(\phi)\]
for each $f\in V^\prime$, i.e., $B_0=\operatorname{LM}_{C}(a)$ and, due to~\eqref{eq:WOTLimitOfFluxesBB0}, also $B_1=a \widetilde{C\kappa_0}\operatorname{LM}_{C}(a)$.

For the proof of $B=a\widetilde{C\kappa_0}$, let $v\in V$ and define $f\coloneqq B_0^{-1}v\in V^\prime$ (trivially extended to $\ker(C)$ by $0$). Then, for $n\in \N$ and $u_n\coloneqq \operatorname{LM}_{C}(a_n)f\in V$, we obtain $u_n\rightharpoonup v$ weakly in $V$, $a_nCu_n\rightharpoonup Bv$ weakly in $\Lp{2}(\Omega)^k$. For $n\in \N$ and for some $\phi\in \Cc{(\Omega)}$ we infer
\begin{equation}\label{eq:fpun}
 f ( \phi u_n) = \langle a_nC u_n, \phi C u_n+\sum_{j=1}^d (\partial_j \phi) A_j u_n\rangle_{\Lp{2}(\Omega)^k}.
\end{equation} By condition (com) we deduce $u_n\to v$ in $\Lp{2}(\Omega)^\ell$, employing the product rule, $\phi u_n \rightharpoonup \phi v$ in $\dom(C)$, and, by~\eqref{eq:fpun}, as $n\to\infty$,
\[
   f ( \phi v)  = \lim_{n\to\infty} \langle a_nC u_n, \phi C u_n\rangle_{\Lp{2}(\Omega)^k}+\langle B v,\sum_{j=1}^d (\partial_j \phi) A_j v\rangle_{\Lp{2}(\Omega)^k}.
\]
Moreover, by~\eqref{eq:WeakFormBB0C}, that is,
 \[ f (\phi v) = \langle B v, \phi C v\rangle_{\Lp{2}(\Omega)^k} + \langle B v, \sum_{j=1}^d (\partial_j \phi) A_j v\rangle_{\Lp{2}(\Omega)^k},\] we get
\[
  \lim_{n\to\infty} \langle a_nC u_n, \phi C u_n\rangle_{\Lp{2}(\Omega)^k} =\langle B v, \phi C v\rangle_{\Lp{2}(\Omega)^k}.
\]
Using the definition of $M(\alpha,\beta;C)$ and the Cauchy--Schwarz inequality, we infer that, for all $\phi\in\Cc{(\Omega)}$ with $\phi\geq 0$ and all $n\in\N$,
\[
\alpha\frac{\lvert\langle\phi^{1/2} Cv,\phi^{1/2} Cu_n\rangle_{\Lp{2}(\Omega)^k}\rvert^2}{\lVert\phi^{1/2} Cv\rVert_{\Lp{2}(\Omega)^k}^2}\leq
\alpha \lVert\phi^{1/2} Cu_n\rVert_{\Lp{2}(\Omega)^k}^2\leq \Re \langle a_nC u_n, \phi C u_n\rangle_{\Lp{2}(\Omega)^k}\text{.}
\]
Letting $n\to\infty$ in the two inner products and recalling $Cu_n\rightharpoonup Cv$ weakly in $\Lp{2}(\Omega)^k$, we obtain
\[
\Re \langle B v, \phi C v\rangle_{\Lp{2}(\Omega)^k}\geq \alpha  \lVert\phi^{1/2} Cv \rVert_{\Lp{2}(\Omega)^k}^2\]
for all $\phi\in\Cc{(\Omega)}$ with $\phi\geq 0$.
Hence,
\begin{equation}\label{eqPointwiseAlphaforB}
\Re \langle B v, C v\rangle_{\C^k} \geq \alpha \lvert C v\rvert_{\C^k}^2
\end{equation}
almost everywhere.
Similarly, we deduce
\begin{equation}\label{eqPointwiseBetaforB}
  \Re \langle B v,C v\rangle_{\C^k} \geq \frac{1}{\beta}\lvert Bv\rvert_{\C^k}^2
\end{equation}
almost everywhere and, by the Cauchy--Schwarz inequality,
\begin{equation}\label{eq:locality}
    \lvert Bv\rvert_{\C^k} \leq \beta \lvert C v\rvert_{\C^k}
\end{equation}
almost everywhere.

For the definition of $a$, take a countable orthonormal basis $(\varphi_n)_{n\in\N}$ of $V$ and a standard exhaustion $(\Omega_n)_{n\in\N}$ of $\Omega$.
By \Cref{lem:localvectors}, for $n\in\mathbb{N}$ and the canonical basis vectors $e_{j}\in \C^k$ for $j\in\{1,\dots,k\}$, we find $\chi_{n,j}\in V$ such that $C \chi_{n,j} = e_j$ on $\Omega_n$.
We define the $\Q+\iu\Q$-linear span
\[
\mathcal{F}\coloneqq \spn_{\Q+\iu\Q}(\{\varphi_n : n\in\N\}\cup \{\chi_{n,j} :n\in\N, j\in\{1,\dots,k\}\}).
\]
This is a countable set.
Taking into account inequality \eqref{eq:locality}, we infer that there exists a null set $N\subseteq \Omega$ such that
\begin{align}
|B\phi(t)|&\leq \beta |C\phi(t)|,\label{eq:localD1}\\
B(\phi+\mu\psi)(t)&=B\phi(t)+\mu B\psi(t),\label{eq:localD2}\\
C(\phi+\mu\psi)(t)&=C\phi(t)+\mu C\psi(t)\label{eq:localD3},
\end{align}
for any $t\in \Omega\setminus N$, $\phi, \psi \in \mathcal{F}$ and $\mu\in \Q+\iu\Q$.
For $t\in \Omega\setminus N$, we define the map $\alpha_t\colon\dom(\alpha_t)\subseteq\C^k \to\C^k$ via
the $\Q+\iu\Q$-linear subspace $\dom(\alpha_t)\coloneqq\{C\phi(t):\phi \in \mathcal{F}\}\subseteq\C^k$ and
\[
\alpha_t C\phi(t)\coloneqq B\phi(t)
\]
for all $\phi \in \mathcal{F}$.
We observe that $\alpha_t$ attains unique values since, if we take $\phi, \psi \in \mathcal{F}$ such that $C\phi(t)=C\psi(t)$, then, by \eqref{eq:localD1}, \eqref{eq:localD2}, and \eqref{eq:localD3}, we have
\begin{equation}\label{eq:LipschitzUnique}
|B\phi(t)- B\psi(t)|=|B(\phi-\psi)(t)|\leq \beta |C(\phi-\psi)(t)|=\beta |C\phi(t)- C\psi(t)|=0.
\end{equation}
Taking $\Omega_n$ such that $t\in\Omega_n$ and choosing $\phi=\mu \chi_{n,j}$ for $j\in\{1,\dots,k\}$ and $\mu\in \Q+\iu\Q$, we deduce that $\Q^k+\iu\Q^k\subseteq\dom(\alpha_t)$.
Moreover, \eqref{eq:localD2} and \eqref{eq:localD3} imply that $\alpha_t$ is $\Q+\iu\Q$-linear, and, by an argument similar to \eqref{eq:LipschitzUnique},
$\alpha_t$ is Lipschitz continuous. This yields a unique continuous extension of $\alpha_t$ to $\C^k$ that, by continuity, is $\C$-linear.
All in all, there exists a unique $a(t)\in \C^{k\times k}$ such that $\alpha_t=a(t)\restriction_{\dom(\alpha_t)}$.
We now claim that the arising (trivially extended) $a\colon \Omega \to \C^{k\times k}$ is an element of $\Lp{\infty}(\Omega)^{k\times k}$.
The boundedness follows by \eqref{eq:localD1} and $e_j\in \dom(\alpha_t)$ for $j\in\{1,\dots,k\}$ and $t\in\Omega\setminus N$. Furthermore, a standard disjointification argument based on $(\Omega_n)_{n\in\N}$ and $\{\chi_{n,j} :n\in\N, j\in\{1,\dots,k\}\}$ shows that $a(\cdot)e_j$ can be represented as a locally finite sum of measurable functions for $j\in\{1,\dots,k\}$. Thus, $a$ is measurable.
Hence, $a\in\Lp{\infty}(\Omega)^{k\times k}$ and $B\phi=aC\phi$ for all $\phi\in \mathcal{F}$. By density of $\spn_{\Q+\iu\Q}\{\varphi_n : n\in\N\}$ in $V$, we conclude that $Bv=a\widetilde{C\kappa_0}v$ for all $v\in V$.

Finally, \Cref{lem:localvectors} applied to~\eqref{eqPointwiseAlphaforB} and~\eqref{eqPointwiseBetaforB} shows that $a\in M(\alpha,\beta;C)$.
\end{proof}
\begin{remark}\label{rem:ProofWIthFEMPropAP}
Note that the definition of $a$ in the proof of \Cref{thm:Hcom} hinges upon an argument that is closely related to disproving the existence of nontrivial solutions to Cauchy's functional equation under certain regularity assumptions. This approach stands in considerable contrast to the original proof of \cite[Theorem~6.5]{Ta09} which applies some additional density property. In our generalised setting, a corresponding approximation property could read
\begin{enumerate}
 \item[\textbf{(ap)}] $A_C\coloneqq\{\phi\in V: C\phi\text{ is piecewise constant}\}$ is dense in the Hilbert space $V$.
\end{enumerate}
Note that density of $\{\phi\in \dom(C): C\phi\text{ is piecewise constant}\}$ in the Hilbert space $\dom(C)$ implies (ap) since $\widetilde{\kappa_0\kappa_0^\ast}\in\Lb(\dom(C),V)$ where $\widetilde{\kappa_0\kappa_0^\ast}\phi\coloneqq\kappa_0\kappa_0^\ast \phi$ for $\phi\in\dom(C)$.

In a certain sense, condition (ap) is a FEM-property that is known for all examples treated in \Cref{ex:int}:
\begin{enumerate}
\item[(a)] For $C=\cgrad$ or $C=\grad$ and $\Omega$ with a sufficiently regular boundary, (ap) follows from the classical piecewise affine FEM-approximation theory. Note that we do not need any boundary regularity in the case $C=\cgrad$.
\item[(b)] For $d=3$, $C=\ccurl$ or $C=\curl$, and $\Omega$ with a sufficiently regular boundary and geometry, (ap) follows from the classical FEM-approximation theory with lowest order Nédélec elements.
\item[(c)] For $C=\sym\cgrad$ or $C=\sym\grad$ and $\Omega$ with a sufficiently regular boundary, (ap) follows from (a) in combination with Korn's inequalities. Once again, we do not need any boundary regularity in case of homogeneous boundary conditions, $C=\sym\cgrad$.
\end{enumerate}
The adapted part of the proof of \Cref{thm:Hcom} concerning the definition of $a$ under the additional assumption of (ap) could now read as follows:

For $n\in\N$ and $j\in\{1,\dots,k\}$, we define $a_i\colon \Omega_n \to \C^{k}$ via $a_i \coloneqq B\chi_{n,j}\restriction_{\Omega_n}$. By \eqref{eq:locality}, this yields a well-defined $a_i\in\Lp{\infty}(\Omega)^{d}$. Hence,
\[
a\colon \begin{cases}
\hfill\Omega &\to \C^{k \times k}\\
\hfill x&\mapsto \left(\xi\mapsto \sum_{i=1}^k\xi_i a_i(x)\right)
\end{cases}
\]
is well-defined and an element of $\Lp{\infty}(\Omega)^{d\times d}$.  Moreover, \eqref{eq:locality} shows that $Bv=a\widetilde{C\kappa_0}v$ for $v\in A$. For general $v\in V$, we obtain the result by (ap).

As we will see in \Cref{sec:NonTrivEx}, the proof without condition (ap) is a relevant improvement that allows to treat differential operators to which currently no FEM-theory exists yet.
\end{remark}
The following lemma is a central tool of this paper.
It is a generalisation of \cite[Theorem~5.2]{ZKO94}, extended to the non-selfadjoint case and more general differential operators.
\begin{theorem}\label{prop:Hconplus} Assume (nd) and (com). Consider $(a_n)_{n\in\N}$, $a$ in $M(\alpha,\beta;C)$ with $a^\ast_n\to a^\ast$ in $\tau_{\Htopo}$, $h\in V^\prime$, and $z\in L_2(\Omega)^k$. 

Let $u_n\in V$ be the unique solution to
\[
      \forall \phi \in V:\langle a_n (C u_n +z), C \phi\rangle_{\Lp{2}(\Omega)^k} = h(\phi).
\]
Then, $u_n\rightharpoonup u$ weakly in $V$ and $a_n (C u_n +z) \rightharpoonup a (C u +z)$ weakly in $\Lp{2}(\Omega)^k$, where $u\in V$ uniquely solves
\[
   \forall\phi \in V:\langle a (C u +z), C \phi\rangle_{\Lp{2}(\Omega)^k} = h(\phi).
\]
\end{theorem}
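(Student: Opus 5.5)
The plan is Tartar's oscillating test function method, applied to the adjoint coefficients and combined with the div-curl type \Cref{lem:dceasy}. First I rewrite the problem as $\langle a_n Cu_n, C\phi\rangle = h(\phi)-\langle a_n z, C\phi\rangle$. The right-hand side is in $V'$ with norm bounded uniformly in $n$. So \Cref{thm:wp} gives existence and uniqueness of $u_n$; note $\iota_0^\ast a_n\iota_0$ is invertible since $\Re a_n\geq\alpha$. Testing with $\phi=u_n$ and using $\Re a_n\geq \alpha$ together with $\|a_n\|\leq\beta$ (from $\Re a_n^{-1}\geq 1/\beta$) bounds $\|Cu_n\|$, hence $\|u_n\|_V$, uniformly. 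For an arbitrary subsequence I extract a further subsequence with $u_n\rightharpoonup w$ in $V$ and $\sigma_n\coloneqq a_n(Cu_n+z)\rightharpoonup\sigma$ in $\Lp{2}(\Omega)^k$. By (com), also $u_n\to w$ in $\Lp{2}(\Omega)^\ell$. Passing to the limit in the equation yields $\langle \sigma, C\phi\rangle=h(\phi)$ for all $\phi\in V$. The whole task is to identify $\sigma=a(Cw+z)$. Uniqueness in \Cref{thm:wp} then gives $w=u$, and the subsequence principle gives convergence of the full sequence.

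For the identification, fix $v\in V$. Since $\operatorname{LM}_C(a^\ast)$ is bijective $V'\to V$, I choose $g\coloneqq \operatorname{LM}_C(a^\ast)^{-1}v\in V'$. Let $v_n\coloneqq\operatorname{LM}_C(a_n^\ast)g$. The hypothesis $a_n^\ast\to a^\ast$ in $\tau_{\Htopo}$ gives $v_n\rightharpoonup v$ in $V$ and $a_n^\ast Cv_n\rightharpoonup a^\ast Cv$. For $\varphi\in\Cc(\Omega)$, real-valued, I use the pointwise identity
\[
\langle \sigma_n, Cv_n\rangle_{\C^k}=\langle Cu_n, a_n^\ast Cv_n\rangle_{\C^k}+\langle z, a_n^\ast Cv_n\rangle_{\C^k},
\]
multiply by $\varphi$ and integrate. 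Each term is then handled by \Cref{lem:dceasy}, and the key observation is that the projected fluxes are constant in $n$.

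On the left I take $q_n=Cv_n\in\ran(C)$ and $r_n=\sigma_n$. Here $\iota_0^\ast\sigma_n$ is independent of $n$, because $\langle\sigma_n,C\psi\rangle=h(\psi)$ for all $\psi\in V$ and $C[V]=\ran(C)$. So the strong convergence hypothesis holds trivially. Similarly, for the first term on the right I take $q_n=Cu_n$ and $r_n=a_n^\ast Cv_n$, whose projection onto $\ran(C)$ is determined by $g$ alone. The $z$-term converges simply by weak convergence of $a_n^\ast Cv_n$. Altogether
\[
\int_\Omega\langle \sigma-a(Cw+z),Cv\rangle_{\C^k}\varphi\,\mathrm{d}x=0
\]
for all $\varphi\in\Cc(\Omega)$ and all $v\in V$. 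Since $\varphi$ is arbitrary, this gives $\langle \sigma-a(Cw+z), Cv\rangle_{\C^k}=0$ almost everywhere.

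The final localisation step is the only subtle one. I apply \Cref{lem:localvectors} to get, for each $\xi\in\C^k$ and each $\omega\Subset\Omega$, some $\tilde v\in\Cc(\Omega)^\ell\subseteq\dom(D_0)\subseteq\dom(C)$ with $C\tilde v=\xi$ on $\omega$. Replacing $\tilde v$ by its orthogonal projection onto $\ker(C)^\perp$ does not change $C\tilde v$ and places it in $V$. Letting $\xi$ run through $e_1,\dots,e_k$ and $\omega$ through an exhaustion of $\Omega$ yields $\sigma=a(Cw+z)$ almost everywhere. The main care needed is in checking the hypotheses of \Cref{lem:dceasy}, in particular the constancy of $\iota_0^\ast r_n$. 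Apart from that, the argument is routine.
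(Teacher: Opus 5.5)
Your proposal is correct and follows the paper's proof essentially step for step. You use the same oscillating test functions $v_n=\operatorname{LM}_C(a_n^\ast)g$, where $g$ is the functional $\phi\mapsto\langle a^\ast Cv, C\phi\rangle$, apply \Cref{lem:dceasy} in the same way (using that $\iota_0^\ast$ of both fluxes is independent of $n$), and localise via \Cref{lem:localvectors}; you are, if anything, more explicit than the paper about real-valued test functions, the projection onto $\ker(C)^\perp$, and the subsequence principle.
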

\begin{proof} Choosing an appropriate subsequence, we obtain $v\in V$ and $q\in \Lp{2}(\Omega)^k$  with $u_n\rightharpoonup v$ weakly in $V$ and $a_n (C u_n +z) \rightharpoonup q$ weakly in $\Lp{2}(\Omega)^k$ since both sequences are bounded. Let $u_0\in V$  and define $v_n\in V$ such that
\[
   \forall\phi \in V:\langle a_n^\ast C v_n, C \phi\rangle_{\Lp{2}(\Omega)^k} =    \langle a^\ast C u_0, C \phi\rangle_{\Lp{2}(\Omega)^k}.
\]
Then, by definition of $\Htopo$-convergence, $v_n\rightharpoonup u_0$ and $a_n^\ast C v_n \rightharpoonup a^\ast C u_0$ in the respective weak topologies. Then, we have, using the embedding $\iota_0\colon \ran(C)\hookrightarrow L_2(\Omega)^k$, that $\iota_0^*(a_n (C u_n +z))$ is independent of $n\in\N$ and so is $\iota_0^*(a_n^* (C v_n ))$. Thus, by \Cref{lem:dceasy}, for all $\phi\in \Cc(\Omega)$,
\begin{multline*}
   \int_\Omega \langle q, C u_0\rangle_{\C^k} \phi = \lim_{n\to\infty}    \int_\Omega \langle a_n (C u_n +z), C v_n\rangle_{\C^k} \phi  \\ = \lim_{n\to\infty}\int_\Omega \langle (C u_n +z), a_n^\ast C v_n\rangle_{\C^k} \phi = \int_\Omega \langle (C v +z), a^\ast C u_0 \rangle_{\C^k} \phi.
\end{multline*}
Hence, 
\[\langle q, C u_0\rangle_{\C^k}=\langle (C v +z), a^\ast C u_0 \rangle_{\C^k} =\langle a(C v +z), C u_0 \rangle_{\C^k}. \]
Since $u_0\in V$ was arbitrary, we infer $q=a(C v +z)$ from \Cref{lem:localvectors}. By
\[
 \forall \phi \in V:\langle a_n (C u_n +z), C \phi\rangle_{\Lp{2}(\Omega)^k} = h(\phi),
\]
 $a_n (C u_n +z) \rightharpoonup q=a(C v +z)$ implies
\[
 \forall \phi \in V:\langle a (C v +z), C \phi\rangle_{\Lp{2}(\Omega)^k} = h(\phi).
\]
Hence, $v=u$ follows from the uniqueness of solutions.
\end{proof}
Next, we argue -- as in the classical situation -- that computing the adjoint is continuous within $\Htopo$-convergence. 
\begin{corollary}\label{cor:adjH} Consider $(a_n)_{n\in\N}$, $a$ in $M(\alpha,\beta;C)$. If $a_n\to a$ in $\tau_{\Htopo}$, then $a^\ast_n\to a^\ast$ in $\tau_{\Htopo}$.
\end{corollary}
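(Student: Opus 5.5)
Since $(M(\alpha,\beta;C),\tau_{\Htopo})$ is metrisable and sequentially compact by \Cref{thm:Hcom}, it suffices to show that every subsequence of $(a_n^\ast)_{n\in\N}$ has a further subsequence which $\Htopo$-converges to $a^\ast$. We first note that $M(\alpha,\beta;C)$ is invariant under taking adjoints: $\Re a^\ast = \Re a$ and $\Re (a^\ast)^{-1} = \Re a^{-1}$ pointwise. So, given a subsequence, \Cref{thm:Hcom} provides a further subsequence (not relabelled) and some $b\in M(\alpha,\beta;C)$ with $a_n^\ast\to b$ in $\tau_{\Htopo}$. The task is then to identify $b=a^\ast$.

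To do this, we apply \Cref{prop:Hconplus} to the sequence $(a_n^\ast)_{n\in\N}$ with candidate limit $b^\ast$. Its hypothesis reads $(a_n^\ast)^\ast = a_n\to (b^\ast)^\ast=b$, which is not what we know. We therefore apply it instead to the original sequence $(a_n)_{n\in\N}$ with limit $b^\ast$: the hypothesis becomes $a_n^\ast\to (b^\ast)^\ast = b$ in $\tau_{\Htopo}$, which holds by construction. Taking $z=0$ and arbitrary $h\in V'$, \Cref{prop:Hconplus} gives that the solutions $u_n=\operatorname{LM}_C(a_n)h$ satisfy
\[
u_n\rightharpoonup \operatorname{LM}_C(b^\ast)h \quad\text{and}\quad a_nCu_n\rightharpoonup b^\ast C\operatorname{LM}_C(b^\ast)h .
\]
This says precisely that $a_n\to b^\ast$ in $\tau_{\Htopo}$ along this subsequence. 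Since also $a_n\to a$ in $\tau_{\Htopo}$ and $\tau_{\Htopo}$ is Hausdorff (uniqueness of limits, via \Cref{lem:localvectors}, as in the proof of \Cref{thm:Hcom}), we get $b^\ast=a$, i.e.\ $b=a^\ast$.

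The only delicate point is the uniqueness of the $\Htopo$-limit, i.e.\ that the limit coefficient is determined by the limit operators. If $\operatorname{LM}_C(a)=\operatorname{LM}_C(\tilde a)$ and the fluxes coincide, then $aCv=\tilde aCv$ for all $v\in V$. \Cref{lem:localvectors} then gives $a\xi=\tilde a\xi$ on every compactly contained $\omega$ for all $\xi\in\C^k$, hence $a=\tilde a$ almost everywhere. This was already used in \Cref{thm:Hcom}. Note also that (nd) and (com) are implicitly assumed in this section, which is needed both for \Cref{thm:Hcom} and \Cref{prop:Hconplus}.
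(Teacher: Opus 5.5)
Your argument is correct, but it detours around a step that already works. The paper's proof is a one-line direct application of \Cref{prop:Hconplus} with $z=0$. Apply it to the sequence $(a_n^\ast)_{n\in\N}$ with limit $a^\ast$ (rather than the unknown $b^\ast$). The hypothesis then reads $(a_n^\ast)^\ast=a_n\to a=(a^\ast)^\ast$ in $\tau_{\Htopo}$, which is exactly what is assumed. The conclusion, for arbitrary $h\in V'$, is literally the definition of $a_n^\ast\to a^\ast$ in $\tau_{\Htopo}$. You only judged the hypothesis unavailable because you inserted the candidate $b^\ast$ where $a^\ast$ belongs.

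Your route is nonetheless valid. You extract a subsequence with $a_n^\ast\to b$ via \Cref{thm:Hcom}, then apply \Cref{prop:Hconplus} to $(a_n)$ with limit $b^\ast$ to get $a_n\to b^\ast$. You conclude $b=a^\ast$ by uniqueness of $\Htopo$-limits and the subsequence principle. The ingredients all hold:
\begin{itemize}
\item $M(\alpha,\beta;C)$ is invariant under taking adjoints.
\item Limits are unique via \Cref{lem:localvectors}, after replacing the test field by its projection onto $\ker(C)^\perp$, which has the same image under $C$.
\item The subsequence principle holds in any topological space.
\end{itemize}
In effect, you apply the direct statement with the roles of $a_n$ and $a_n^\ast$ swapped, then use compactness and the Hausdorff property to return to the original claim. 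This buys nothing over the paper's argument. Both rely on \Cref{prop:Hconplus}, hence on (nd) and (com), but yours additionally needs \Cref{thm:Hcom} and the identification of limits, which the direct argument avoids entirely.
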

\begin{proof} 
Choose $z=0$ in \Cref{prop:Hconplus} and use $a_n^{**}=a_n$ for $n\in\N$.
\end{proof}

\section{Nonlocal $\Htopo$-Convergence}\label{sec:non}

The integral tool in the proofs of  this paper's main statements is the following lemma about the complementary problem to~\eqref{eq:AbsElliPDE}.
\begin{lemma}\label{prop:equivprobl}
Let $\Omega\subseteq \R^d$ be open and bounded, and $0<\alpha\leq\beta$. Let $\mathcal{D}$ be as in~\eqref{eq:DefinMathCalD}, fix some closed $D_0\subseteq C\subseteq D$, and assume (com).
 
For $a \in M(\alpha,\beta; C)$ and $z\in \Lp{2}(\Omega)^k$ consider the two problems \begin{enumerate}
 \item[(i)] find $u\in V$ such that 
 \[
     \forall \phi \in V\colon \langle a (C u + z), C \phi\rangle_{\Lp{2}(\Omega)^k} =0;
 \]
 \item[(ii)] find $p\in \ran(C)^\perp\subseteq \Lp{2}(\Omega)^k$ such that 
 \[
   \forall q \in \ran(C)^\perp\subseteq \Lp{2}(\Omega)^k\colon \langle a^{-1} p, q\rangle_{\Lp{2}(\Omega)^k} = \langle z,q\rangle_{\Lp{2}(\Omega)^k}.
 \]
\end{enumerate}
Then (i) and (ii) each admit a uniquely determined solution. If $u\in V$ and $p\in \ran(C)^\perp\subseteq \Lp{2}(\Omega)^k$ are the respective solutions and $\iota_1 \colon  \ran(C)^\perp\hookrightarrow\Lp{2}(\Omega)^k$ is the canonical embedding then
\begin{equation}\label{eq:equivproblSchurCompSolves}
\iota_1(\iota_1^\ast a^{-1}\iota_1)^{-1}\iota_1^\ast z = p = a(C u + z).
\end{equation}
\end{lemma}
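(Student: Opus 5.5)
Existence and uniqueness come first. For (i), I rewrite the problem as $\langle aCu, C\phi\rangle = f(\phi)$ with $f(\phi)\coloneqq -\langle az, C\phi\rangle_{\Lp{2}(\Omega)^k}$, which is an element of $V'$. Since $\Re a\geq\alpha$ and $a$ is a multiplication operator, $\Re\iota_0^\ast a\iota_0\geq\alpha$. Hence $\iota_0^\ast a\iota_0$ is boundedly invertible, and \Cref{thm:wp} gives a unique solution $u\in V$.

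For (ii), I use $\Re a^{-1}\geq 1/\beta$, which gives $\Re \iota_1^\ast a^{-1}\iota_1\geq 1/\beta$ on $\ran(C)^\perp$. So $\iota_1^\ast a^{-1}\iota_1$ is boundedly invertible. Problem (ii) reads $\iota_1^\ast a^{-1}\iota_1 p=\iota_1^\ast z$ for $p\in\ran(C)^\perp$, so its unique solution is $p=(\iota_1^\ast a^{-1}\iota_1)^{-1}\iota_1^\ast z$. Regarded in $\Lp{2}(\Omega)^k$, this is $\iota_1(\iota_1^\ast a^{-1}\iota_1)^{-1}\iota_1^\ast z$, which is the first equality in \eqref{eq:equivproblSchurCompSolves}.

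For the second equality, let $u$ solve (i) and set $\tilde p\coloneqq a(Cu+z)$. I check that $\tilde p$ solves (ii); uniqueness then gives $\tilde p=p$.

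First, $\tilde p\in\ran(C)^\perp$. Indeed, (i) says $\langle \tilde p, C\phi\rangle=0$ for all $\phi\in V$. Since $C\phi$ for $\phi\in\dom(C)$ equals $C$ applied to the $\ker(C)^\perp$-component, $\{C\phi:\phi\in V\}=\ran(C)$, so $\tilde p\perp\ran(C)$.

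Second, take $q\in\ran(C)^\perp$. Then
\[
\langle a^{-1}\tilde p,q\rangle=\langle Cu+z,q\rangle=\langle z,q\rangle,
\]
because $Cu\in\ran(C)$ is orthogonal to $q$. So $\tilde p$ satisfies (ii).

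The main point needing care is the identification $\{C\phi:\phi\in V\}=\ran(C)$ together with the coercivity of the compressions $\iota_0^\ast a\iota_0$ and $\iota_1^\ast a^{-1}\iota_1$. The rest is direct verification, and condition (com) enters only through \Cref{thm:wp}.
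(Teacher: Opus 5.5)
Your proposal is correct and takes the same route as the paper. The paper simply calls unique solvability of (i) and (ii) ``clear'', while you justify it through the coercivity of $\iota_0^\ast a\iota_0$ and $\iota_1^\ast a^{-1}\iota_1$ and the identity $\{C\phi:\phi\in V\}=\ran(C)$; then, exactly as you do, it checks that $a(Cu+z)$ lies in $\ran(C)^\perp$ and solves (ii), since $Cu\perp q$ for every $q\in\ran(C)^\perp$.
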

\begin{proof}
Unique existence of solutions is clear as well as $\iota_1(\iota_1^\ast a^{-1}\iota_1)^{-1}\iota_1^\ast z = p$.

Next, let $u\in V$ be the solution to (i). Then, $ p\coloneqq a(C u + z) \in \ran(C)^\perp\subseteq \Lp{2}(\Omega)^k$. Moreoever, for $q\in \ran(C)^\perp$ we compute
\[
   \langle a^{-1} p, q\rangle_{\Lp{2}(\Omega)^k}  = \langle a^{-1} (a(C u+z)), q\rangle_{\Lp{2}(\Omega)^k} =  \langle(C u+z), q\rangle_{\Lp{2}(\Omega)^k} = \langle z,q\rangle_{\Lp{2}(\Omega)^k},
\]as $Cu\in\ran(C)\subseteq \Lp{2}(\Omega)^k$.
\end{proof}

The following compactness statement based on results from \cite{Wa18} will be crucial for the further proofs.

 \begin{lemma}\label{lemma:CompSchurABPlusMultOp}
Consider a Hilbert space $\mathcal{H}$, a closed subspace $\mathcal{H}_0\subseteq \mathcal{H}$, and $\mathcal{H}_1\coloneqq \mathcal{H}_0^\perp$. 

For $0<\alpha\leq\beta$, the closure of
 \begin{equation*}
    \mathcal{M}(\alpha,\beta;\mathcal{H})\coloneqq \{a\in \Lb(\mathcal{H}) ; \Re a\geq \alpha, \Re a^{-1}\geq 1/\beta\}
    \end{equation*}
 in $(\mathcal{M}(\mathcal{H}_0,\mathcal{H}_1),\tau(\mathcal{H}_0,\mathcal{H}_1))$ is compact.
 \end{lemma}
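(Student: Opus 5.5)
The plan is to transport the problem, via the four Schur maps, into a product of weak-operator-topology balls, where compactness is a consequence of Banach--Alaoglu. Write $a=\begin{pmatrix} a_{00}&a_{01}\\ a_{10}&a_{11}\end{pmatrix}$ with respect to $\mathcal{H}=\mathcal{H}_0\oplus\mathcal{H}_1$, and consider
\[
\Theta\colon \mathcal{M}(\mathcal{H}_0,\mathcal{H}_1)\ni a\mapsto \bigl(a_{00}^{-1},\ a_{00}^{-1}a_{01},\ a_{10}a_{00}^{-1},\ a_{11}-a_{10}a_{00}^{-1}a_{01}\bigr).
\]
By definition, $\tau(\mathcal{H}_0,\mathcal{H}_1)$ is the initial topology of $\Theta$, and $\Theta$ is injective because $a$ can be reconstructed from the four entries: $a_{00}$ is the inverse of the first entry, $a_{01}=a_{00}\cdot(\text{second})$, $a_{10}=(\text{third})\cdot a_{00}$, and $a_{11}=(\text{fourth})+a_{10}a_{00}^{-1}a_{01}$. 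Hence $\Theta$ is a homeomorphism onto its image, which carries the product of the weak operator topologies.

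\emph{Step 1: boundedness.} For $a\in\mathcal{M}(\alpha,\beta;\mathcal{H})$ I would record the following estimates.
\begin{itemize}
\item From $\Re a_{00}\geq\alpha$ we get $\|a_{00}^{-1}\|\leq 1/\alpha$.
\item From $\Re a^{-1}\geq1/\beta$ we get $\|a\|\leq\beta$, hence all blocks have norm at most $\beta$.
\item Consequently the second and third entries are bounded by $\beta/\alpha$ and the fourth by $\beta+\beta^2/\alpha$.
\end{itemize}
So $\Theta[\mathcal{M}(\alpha,\beta;\mathcal{H})]$ lies in a product of closed norm balls. Each such ball is compact in the weak operator topology by Banach--Alaoglu.

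\emph{Step 2: reducing to closedness.} The image of the closure $\overline{\mathcal{M}(\alpha,\beta;\mathcal{H})}$ sits inside the WOT-closure $K$ of $\Theta[\mathcal{M}(\alpha,\beta;\mathcal{H})]$ in that product, and $K$ is compact. It therefore suffices to show $K\subseteq\Theta[\mathcal{M}(\mathcal{H}_0,\mathcal{H}_1)]$. Once this holds, $K=\Theta[\overline{\mathcal{M}(\alpha,\beta;\mathcal{H})}]$, using that $\Theta$ is a homeomorphism onto its image, and the statement follows.

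\emph{Step 3: every limit point comes from an admissible operator (main obstacle).} Take a limit point $(T_0,T_1,T_2,T_3)$ along a net $a^{(\lambda)}$.
\begin{itemize}
\item Each $a_{00}^{(\lambda)}$ satisfies $\Re a_{00}^{(\lambda)}\geq\alpha$ and $\Re (a_{00}^{(\lambda)})^{-1}\geq 1/\beta$; the latter holds because $(a_{00}^{(\lambda)})^{-1}$ has numerical-range bounds inherited from $\|a\|\leq\beta$ together with $\Re a\geq\alpha$. By the argument in the footnote of the proof of \Cref{thm:Hcom}, the WOT-limit $T_0$ is then invertible with $\Re T_0\geq 1/\beta$ and $\Re T_0^{-1}\geq\alpha$.
\item Define $a_{00}\coloneqq T_0^{-1}$, $a_{01}\coloneqq T_0^{-1}T_1$, $a_{10}\coloneqq T_2T_0^{-1}$ and $a_{11}\coloneqq T_3+T_2T_0^{-1}T_1$. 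Let $a$ be the resulting block operator.
\item By construction $\Theta(a)=(T_0,\dots,T_3)$, and $a_{00}^{-1}=T_0\in\Lb(\mathcal{H}_0)$. It remains to verify $a^{-1}\in\Lb(\mathcal{H})$. For block operators with invertible $a_{00}$, invertibility of $a$ is equivalent to invertibility of the Schur complement $T_3$.
\item To get $T_3$ invertible, use the known positivity structure: for $a\in\mathcal{M}(\alpha,\beta;\mathcal{H})$ the Schur complement $a_{11}-a_{10}a_{00}^{-1}a_{01}$ equals $((a^{-1})_{11})^{-1}$. Hence it satisfies $\Re S\geq\alpha'$ and $\Re S^{-1}\geq 1/\beta'$ for constants depending only on $\alpha,\beta$; I expect simply $\alpha$ and $\beta$, since $(a^{-1})_{11}$ inherits $\Re\geq1/\beta$ and $\Re (a^{-1})_{11}^{-1}\geq\alpha$.
\item The same footnote argument then passes these bounds to the WOT-limit $T_3$, giving its invertibility.
\end{itemize}
Thus $a\in\mathcal{M}(\mathcal{H}_0,\mathcal{H}_1)$, which closes Step 2.

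The delicate point is exactly this verification that the WOT limits of $a_{00}^{-1}$ and of the Schur complement stay uniformly coercive, so that invertibility survives the weak limit. This is where the argument rests on results in \cite{Wa18}, which I would cite for the identity $\bigl(a_{11}-a_{10}a_{00}^{-1}a_{01}\bigr)^{-1}=(a^{-1})_{11}$ and the coercivity bounds.
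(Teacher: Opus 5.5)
Your proposal is correct and is essentially the paper's argument. The paper records the same block estimates: $\Re a_{00}\geq\alpha$, $\Re a_{00}^{-1}\geq 1/\beta$, the corresponding bounds for the Schur complement $a_{11}-a_{10}a_{00}^{-1}a_{01}=((a^{-1})_{11})^{-1}$, and $\lVert a_{00}^{-1}a_{01}\rVert,\lVert a_{10}a_{00}^{-1}\rVert\leq\beta/\alpha$. It then defers the Banach--Alaoglu-plus-reconstruction step, which you carry out explicitly with nets, to \cite[Theorem~5.6]{BuSkWa24} and \cite[Theorem~5.5]{Wa18}.

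One small slip: $\lVert a\rVert\leq\beta$ together with $\Re a\geq\alpha$ only yields $\Re a_{00}^{-1}\geq\alpha/\beta^2$. The bound $1/\beta$ instead follows from $\Re a^{-1}\geq 1/\beta$ via $\Re\langle y,a_{00}y\rangle\geq\frac{1}{\beta}\lVert a\iota_0 y\rVert^2\geq\frac{1}{\beta}\lVert a_{00}y\rVert^2$. Either uniform constant suffices for your limit argument.
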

\begin{proof}
First, recall the definition of $a_{jk}$ from the introduction with the canonical embeddings $\iota_0 \colon  \mathcal{H}_0\hookrightarrow\mathcal{H}$ and $\iota_1 \colon  \mathcal{H}_1\hookrightarrow\mathcal{H}$.
For $a\in \mathcal{M}(\alpha,\beta;\mathcal{H})$, simple calculations (see, e.g., \cite[Lemma~1.5.3]{Bu25} and \cite[Proposition~6.2.3]{SeTrWa22})
and the (inverted) Schur complement
  \begin{align*}
  ( a^{-1})_{11}&=(a_{11}-a_{10}a_{00}^{-1}a_{01})^{-1}  \end{align*}
show $\Re a_{00}\geq \alpha$, $\Re a_{11}-a_{10}a_{00}^{-1}a_{01}\geq \alpha$, $\Re a_{00}^{-1}\geq 1/\beta$, $\Re (a_{11}-a_{10}a_{00}^{-1}a_{01})^{-1}\geq 1/\beta$,
and $\lVert a_{10}a_{00}^{-1}\rVert,\lVert a_{00}^{-1}a_{01}\rVert\leq\beta/\alpha$.
Hence, the statement follows as a local version of \cite[Theorem~5.6]{BuSkWa24}; cf.\ also the similar original \cite[Theorem~5.5]{Wa18}.
\end{proof}

With all these tools acquired, we can provide a proof for the following main theorem of this paper, i.e., a more general version of \Cref{thm:main-proto}.
\begin{theorem}\label{thm:main-general} Let $\Omega\subseteq \R^d$ be open and bounded, and $0<\alpha\leq\beta$. Let $\mathcal{D}$ be as in~\eqref{eq:DefinMathCalD}, fix some closed $D_0\subseteq C\subseteq D$, and assume (com) and (nd). Then,
    \[(M(\alpha,\beta; C),\tau_{\Htopo})=(M(\alpha,\beta; C),\tau(\ran(C),\ran(C)^{\perp}))\text{.}\]
     In particular, for $(a_n)_{n\in\N}$ in $M(\alpha,\beta;C)$ and $a\in \Lb(\Lp{2}(\Omega)^k)$, the following conditions are equivalent:
    \begin{enumerate}
      \item[(i)] $a\in M(\alpha,\beta; C)$ and $a_n\to a$ in $\tau_{\Htopo}$;
      \item[(ii)] $a \in \mathcal{M}(\ran(C),\ran(C)^{\perp})$ and $a_n\to a$ in $\tau(\ran(C),\ran(C)^{\perp})$.
    \end{enumerate}    
\end{theorem}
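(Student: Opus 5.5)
Write $\mathcal{H}_0\coloneqq\ran(C)$ and $\mathcal{H}_1\coloneqq\ran(C)^\perp$. The first step is to express both topologies through the block entries of $a$. By \Cref{thm:wp}, $\operatorname{LM}_C(a)=(\iota_0^\ast\widetilde{C\kappa_0})^{-1}a_{00}^{-1}\bigl((\widetilde{C\kappa_0})^\diamond\iota_0\bigr)^{-1}$. Since the outer factors are fixed isomorphisms, weak operator convergence of $\operatorname{LM}_C(a_n)$ is equivalent to weak operator convergence of $(a_n)_{00}^{-1}$. For the flux I decompose
\[
a\widetilde{C\kappa_0}\operatorname{LM}_C(a)=\iota_0\iota_0^\ast a\iota_0 a_{00}^{-1}(\cdots)+\iota_1\iota_1^\ast a\iota_0a_{00}^{-1}(\cdots).
\]
The first summand is constant in $a$ by \eqref{eq:ellipticWPThmProof}. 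Hence flux convergence is equivalent to WOT convergence of $a_{10}a_{00}^{-1}$. It follows that $\tau_{\Htopo}$ is exactly the initial topology of the two maps $a\mapsto a_{00}^{-1}$ and $a\mapsto a_{10}a_{00}^{-1}$. So (ii)$\Rightarrow$(i) is immediate for sequences, except for the claim $a\in M(\alpha,\beta;C)$. For that I use \Cref{thm:Hcom}: every subsequence has an $\Htopo$-convergent subsequence with limit $b\in M(\alpha,\beta;C)$. By the identification just made, $b_{00}^{-1}=a_{00}^{-1}$ and $b_{10}b_{00}^{-1}=a_{10}a_{00}^{-1}$. The remaining two Schur maps are handled as in the next paragraph, and Hausdorffness of $\tau(\mathcal{H}_0,\mathcal{H}_1)$ then gives $a=b$. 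This is legitimate because $a\mapsto a$ is recovered from the four Schur data: $a_{00}$, $a_{01}$, $a_{10}$ and $a_{11}$ are determined.

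For (i)$\Rightarrow$(ii) it remains to show WOT convergence of $(a_n)_{00}^{-1}(a_n)_{01}$ and of the Schur complement $(a_n)_{11}-(a_n)_{10}(a_n)_{00}^{-1}(a_n)_{01}$. For the first, I apply \Cref{cor:adjH} to get $a_n^\ast\to a^\ast$ in $\tau_{\Htopo}$. By the previous paragraph applied to $a_n^\ast$, this gives $(a_n^\ast)_{10}(a_n^\ast)_{00}^{-1}\to(a^\ast)_{10}(a^\ast)_{00}^{-1}$ in WOT. Taking adjoints, which is WOT-continuous, yields $(a_n)_{00}^{-1}(a_n)_{01}\to a_{00}^{-1}a_{01}$.

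The Schur complement is the main obstacle, and this is where \Cref{prop:equivprobl} and \Cref{prop:Hconplus} enter. Since $a_n$ are multiplication operators, $a_n^{-1}\in M(1/\beta,1/\alpha;C)$, and $(a_n^{-1})_{11}=\iota_1^\ast a_n^{-1}\iota_1$ is invertible. By the inverted Schur complement formula, $(a_n)_{11}-(a_n)_{10}(a_n)_{00}^{-1}(a_n)_{01}=\bigl(\iota_1^\ast a_n^{-1}\iota_1\bigr)^{-1}$. By \eqref{eq:equivproblSchurCompSolves}, for $z\in\Lp{2}(\Omega)^k$,
\[
\iota_1(\iota_1^\ast a_n^{-1}\iota_1)^{-1}\iota_1^\ast z=a_n(Cu_n+z),
\]
where $u_n$ solves problem (i) of \Cref{prop:equivprobl} with $a_n$. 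Applying \Cref{prop:Hconplus} with $h=0$ (its hypothesis $a_n^\ast\to a^\ast$ is supplied by \Cref{cor:adjH}) gives $a_n(Cu_n+z)\rightharpoonup a(Cu+z)$. The latter equals $\iota_1(\iota_1^\ast a^{-1}\iota_1)^{-1}\iota_1^\ast z$ by \Cref{prop:equivprobl} for $a$, and $\iota_1(\iota_1^\ast a^{-1}\iota_1)^{-1}\iota_1^\ast z=\iota_1\bigl(a_{11}-a_{10}a_{00}^{-1}a_{01}\bigr)\iota_1^\ast z$ by the Schur formula for $a$. Choosing $z\in\mathcal{H}_1$ shows WOT convergence of the Schur complements. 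Since $a\in M(\alpha,\beta;C)\subseteq\mathcal{M}(\mathcal{H}_0,\mathcal{H}_1)$, this completes (i)$\Rightarrow$(ii).

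Finally, equality of the topologies, not just of sequential convergence, follows from the fact that both are metrisable and compact on $M(\alpha,\beta;C)$. For $\tau_{\Htopo}$ this is \Cref{thm:Hcom}. For $\tau(\mathcal{H}_0,\mathcal{H}_1)$, \Cref{lemma:CompSchurABPlusMultOp} gives a compact ambient set; metrisability on bounded sets comes from separability, as in the proof of \Cref{thm:Hcom}. The identity map is therefore sequentially continuous in both directions, and hence a homeomorphism.
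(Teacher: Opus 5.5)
Your proof is correct and follows essentially the paper's argument. You identify $\tau_{\Htopo}$ as the initial topology of $a\mapsto a_{00}^{-1}$ and $a\mapsto a_{10}a_{00}^{-1}$, and use \Cref{thm:Hcom} to produce a candidate limit $b\in M(\alpha,\beta;C)$. You then get the Schur complement from \Cref{cor:adjH}, \Cref{prop:Hconplus} with $h=0$ and \Cref{prop:equivprobl}, and conclude $a=b$ by Hausdorffness.

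There are two small deviations from the paper, and both are valid. First, you obtain $a_{n,00}^{-1}a_{n,01}\to a_{00}^{-1}a_{01}$ by applying the identification to $a_n^\ast$ and taking adjoints, where the paper reads it off the potential $Cu_n+z$ via \eqref{eq:AbstrCurlFlux}. Second, you pass from sequential to topological equality using metrisability of both topologies, where the paper uses compactness of the Schur closure together with closedness of $M(\alpha,\beta;C)$.
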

\begin{proof}
First, recall the definition of $a_{jk}$ from the introduction with $\mathcal{H}\coloneqq \Lp{2}(\Omega)^k$, $\mathcal{H}_0\coloneqq\ran(C)$, and $\mathcal{H}_1\coloneqq\ran(C)^\bot$. Note that this renders the use of $\iota_0$ and $\iota_1$ for
the canonical embeddings $\iota_0 \colon  \ran(C)\hookrightarrow\Lp{2}(\Omega)^k$ and $\iota_1 \colon  \ran(C)^\perp\hookrightarrow\Lp{2}(\Omega)^k$ consistent throughout the whole paper.
 For $a\in M(\alpha,\beta; C)$, we compute,
 \begin{align*}
       a \iota_0 a_{00}^{-1} & =  \begin{pmatrix} \iota_0 & \iota_1 \end{pmatrix} \begin{pmatrix} a_{00} & a_{01} \\ a_{10} & a_{11}\end{pmatrix} \begin{pmatrix} \iota_0^* \\ \iota_1^*\end{pmatrix} \iota_0 a_{00}^{-1} \\
      &  =  \begin{pmatrix} \iota_0 & \iota_1 \end{pmatrix} \begin{pmatrix} a_{00} & a_{01} \\ a_{10} & a_{11}\end{pmatrix} \begin{pmatrix} a_{00}^{-1} \\ 0\end{pmatrix}  \\       
      & = \iota_0+ \iota_1 a_{10}a_{00}^{-1}.
      \end{align*}
By \Cref{thm:wp}  and \begin{align*}
  a\widetilde{C\kappa_0}  \operatorname{LM}_{C}(a)  & =  a\widetilde{C\kappa_0}   (\iota_0^\ast \widetilde{C\kappa_0})^{-1}(\iota_0^\ast a\iota_0)^{-1}\bigl(\bigl(\widetilde{C\kappa_0} \bigr)^\diamond\iota_0\bigr)^{-1} \\
  & =  a\iota_0 a_{00}^{-1} \bigl(\bigl(\widetilde{C\kappa_0} \bigr)^\diamond\iota_0\bigr)^{-1},
   \end{align*}
   this implies that convergence in $\tau_{\Htopo}$ is equivalent to the mappings $ a\mapsto a_{00}^{-1}$ and $a\mapsto a_{10}a_{00}^{-1}$ being continuous in their respective weak operator topologies chosen in the image space.
In other words, the identity mapping
\[(M(\alpha,\beta; C),\tau(\ran(C),\ran(C)^{\perp}))\hookrightarrow (M(\alpha,\beta; C),\tau_{\Htopo})\]
is continuous. Since both involved topologies are metric and due to \Cref{lemma:CompSchurABPlusMultOp} applied to
$( \mathcal{M}(\alpha,\beta;\Lp{2}(\Omega)^{k}), \tau(\ran(C),\ran(C)^{\perp}))$, it remains to show that $M(\alpha,\beta; C)\subseteq \mathcal{M}(\alpha,\beta;\Lp{2}(\Omega)^{k})$ is closed with respect to $\tau(\ran(C),\ran(C)^{\perp})$. 
 
 Let $(a_n)_{n\in\N}$ in $M(\alpha,\beta;C)$ and $a\in \mathcal{M}(\ran(C),\ran(C)^{\perp})$ such that
 $a_n\to a$ with respect to $\tau(\ran(C),\ran(C)^{\perp})$. By virtue of \Cref{thm:Hcom}, there exists $b\in M(\alpha,\beta; C)$
 with $a_n\to b$ in $\tau_{\Htopo}$ (for a subsequence, which we do not relabel). Our previous considerations yield $a_{n,00}^{-1}\to b_{00}^{-1}$ and $a_{n,10}a_{n,00}^{-1}\to b_{10}b_{00}^{-1}$ in the respective
 weak operator topologies. For the following computations, we refer to the inverse of $a$ written in the decomposition given by $\ran(C)\oplus \ran(C)^{\bot}$; see also \cite[Lemma 5.4]{Wa25}.
 \Cref{cor:adjH}, \Cref{prop:Hconplus} (with $h=0$), and~\eqref{eq:equivproblSchurCompSolves} from \Cref{prop:equivprobl} show 
 \[( a_n^{-1})_{11}^{-1}\to ( b^{-1})_{11}^{-1}
  \quad\text{ and }\quad a_n^{-1}\iota_1( a_n^{-1})_{11}^{-1}\iota_1^\ast\to b^{-1}\iota_1( b^{-1})_{11}^{-1}\iota_1^\ast\]
  in the respective weak operator topologies. Considering the Schur complement
  \[( a_n^{-1})_{11}^{-1}=a_{n,11}-a_{n,10}a_{n,00}^{-1}a_{n,01}\]
  and
  \begin{equation}\label{eq:AbstrCurlFlux}
   \begin{aligned}
       a&_n^{-1}\iota_1( a_n^{-1})_{11}^{-1}\\
        & =  \begin{pmatrix} \iota_0 & \iota_1 \end{pmatrix} \begin{pmatrix} a_{n,00}^{-1}+a_{n,00}^{-1}a_{n,01}(a_n^{-1})_{11}a_{n,10}a_{n,00}^{-1} & -a_{n,00}^{-1}a_{n,01} (a_n^{-1})_{11}\\ -(a_n^{-1})_{11}a_{n,10}a_{n,00}^{-1} & (a_n^{-1})_{11}\end{pmatrix} \begin{pmatrix} \iota_0^* \\ \iota_1^*\end{pmatrix} \iota_1( a_n^{-1})_{11}^{-1} \\
      &  =  \begin{pmatrix} \iota_0 & \iota_1 \end{pmatrix} \begin{pmatrix} a_{n,00}^{-1}+a_{n,00}^{-1}a_{n,01}(a_n^{-1})_{11}a_{n,10}a_{n,00}^{-1} & -a_{n,00}^{-1}a_{n,01} (a_n^{-1})_{11}\\ -(a_n^{-1})_{11}a_{n,10}a_{n,00}^{-1} & (a_n^{-1})_{11}\end{pmatrix} \begin{pmatrix} 0 \\ ( a_n^{-1})_{11}^{-1}\end{pmatrix}  \\       
      & = \iota_0 a_{n,00}^{-1}a_{n,01}+\iota_1,
      \end{aligned}
\end{equation}
  (and similarly for $b$), we deduce
  \[a_{n,00}^{-1}a_{n,01}\to b_{00}^{-1}b_{01}
  \quad\text{ and }\quad
  a_{n,11}-a_{n,10}a_{n,00}^{-1}a_{n,01}\to b_{11}-b_{10}b_{00}^{-1}b_{01}
  \]
   in the respective weak operator topologies.
  All in all, we infer $a_n\to b$ in $\tau(\ran(C),\ran(C)^{\perp})$, i.e., $a=b$.
\end{proof}

\begin{proof}[Proof of \Cref{thm:main-proto}] By \Cref{exa:nd} and \Cref{ex:comp}, conditions (com) and (nd) are satisfied for $C=D_0=\cgrad$ with $V=\cH^1(\Omega)$. Moreover, we clearly have $(M(\alpha,\beta; C),\tau_{\Htopo})=(M(\alpha,\beta; \Omega),\tau_{\Htopo})$. Hence, \Cref{thm:main-general} proves the claim.
 \end{proof}

Next, we turn to a proof of \Cref{cor:main-proto}; that is, \cite[Theorem 5.11]{Wa18} the original proof of which contained a gap.

\begin{proof}[Proof of \Cref{cor:main-proto}] By \Cref{thm:main-proto} and \Cref{thm:main-general} it remains to analyse $a\mapsto a_{00}^{-1}a_{01}$ and $a\mapsto a_{11}-a_{10}a_{00}^{-1}a_{01}=(a^{-1})_{11}^{-1}=(\iota_1^\ast a^{-1}\iota_1)^{-1}$
for $a\in M(\alpha,\beta; \Omega)$. By virtue of the Helmholtz decomposition~\eqref{eq:Helmholtz}, 
\Cref{thm:wp} yields
\[  \operatorname{LM}_{\curl}(a^{-1})= (\iota_1^\ast \widetilde{\curl\kappa_0})^{-1}(a^{-1})_{11}^{-1}\bigl(\bigl(\widetilde{\curl\kappa_0}\bigr)^\diamond\iota_1\bigr)^{-1}\text{,}\]
where $\operatorname{LM}_{\curl}(a^{-1})$ exactly is the unique solution to~\eqref{eq:ComplCurlProbl} and $\kappa_0\colon \ker(\curl)^\perp\hookrightarrow \Lp{2}(\Omega)^3$ is the canonical embedding. Furthermore, 
 \begin{align*}
  a^{-1}&\widetilde{\curl\kappa_0}  \operatorname{LM}_{\curl}(a^{-1})\\
    & =  a^{-1}\widetilde{\curl\kappa_0} (\iota_1^\ast \widetilde{\curl\kappa_0})^{-1}(a^{-1})_{11}^{-1}\bigl(\bigl(\widetilde{\curl\kappa_0}\bigr)^\diamond\iota_1\bigr)^{-1}\\
  & =  a^{-1}\iota_1 (a^{-1})_{11}^{-1}\bigl(\bigl(\widetilde{\curl\kappa_0}\bigr)^\diamond\iota_1\bigr)^{-1}.
   \end{align*}
It remains to consult~\eqref{eq:AbstrCurlFlux}.
\end{proof}

Finally, we provide a proof of \Cref{cor:curl}. For this, we recall two elementary facts from \cite{Wa25} concerning the Schur topology. 
\begin{lemma}[{{\cite[Lemma 5.4]{Wa25}}}]\label{lem:inv}
Consider a Hilbert space $\mathcal{H}$, a closed subspace $\mathcal{H}_0\subseteq \mathcal{H}$, and $\mathcal{H}_1\coloneqq \mathcal{H}_0^\perp$. 

For $0<\alpha\leq\beta$,
\[
   \big( \mathcal{M}(\alpha,\beta;\mathcal{H}), \tau(\mathcal{H}_0,\mathcal{H}_1)\big) \ni a\mapsto a^{-1} \in    \big( \mathcal{M}(1/\beta,1/\alpha;\mathcal{H}), \tau(\mathcal{H}_1,\mathcal{H}_0)\big)
\]
is a homeomorphism.
\end{lemma}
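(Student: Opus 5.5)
The plan is to express the four defining maps of $\tau(\mathcal{H}_1,\mathcal{H}_0)$, evaluated at $a^{-1}$, through the four defining maps of $\tau(\mathcal{H}_0,\mathcal{H}_1)$ evaluated at $a$, using block-matrix (Schur complement) inversion with respect to $\mathcal{H}=\mathcal{H}_0\oplus\mathcal{H}_1$. Write $S\coloneqq a_{11}-a_{10}a_{00}^{-1}a_{01}$. For $a\in\mathcal{M}(\alpha,\beta;\mathcal{H})$ we know from the proof of \Cref{lemma:CompSchurABPlusMultOp} that $a_{00}$ and $S$ are invertible with $\Re S\geq\alpha$ and $\Re S^{-1}\geq 1/\beta$.

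The block inverse is
\[
a^{-1}=\begin{pmatrix} a_{00}^{-1}+a_{00}^{-1}a_{01}S^{-1}a_{10}a_{00}^{-1} & -a_{00}^{-1}a_{01}S^{-1}\\ -S^{-1}a_{10}a_{00}^{-1} & S^{-1}\end{pmatrix},
\]
as used in \eqref{eq:AbstrCurlFlux}. Setting $b\coloneqq a^{-1}$, the roles of the indices swap, since $\mathcal{H}_1$ now plays the part of the ``0'' space. This gives
\begin{align*}
b_{11}^{-1}&=S,\\
b_{11}^{-1}b_{10}&=-a_{10}a_{00}^{-1},\\
b_{01}b_{11}^{-1}&=-a_{00}^{-1}a_{01},\\
b_{00}-b_{01}b_{11}^{-1}b_{10}&=a_{00}^{-1}.
\end{align*}
The last identity follows by direct substitution, since the correction term cancels the extra summand in the $(0,0)$-block. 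Thus the map $a\mapsto a^{-1}$ simply permutes the four maps, up to sign, and exchanges the roles of $a_{00}^{-1}$ and the Schur complement $S$.

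Well-definedness follows from the elementary fact that $\Re a\geq\alpha$ and $\Re a^{-1}\geq 1/\beta$ are exactly the conditions $\Re b^{-1}\geq\alpha$ and $\Re b\geq 1/\beta$. Hence $b\in\mathcal{M}(1/\beta,1/\alpha;\mathcal{H})$, and $b\in\mathcal{M}(\mathcal{H}_1,\mathcal{H}_0)$ because $b_{11}=S^{-1}$ is invertible. The map is a bijection with inverse $b\mapsto b^{-1}$ of the same type, with $\alpha,\beta$ replaced by $1/\beta,1/\alpha$ and $\mathcal{H}_0,\mathcal{H}_1$ swapped.

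For continuity, both topologies are initial topologies. A map into an initial topology is continuous iff its compositions with the defining maps are continuous. By the identities above, each such composition equals $\pm$ one of the defining maps of the source topology, which is continuous into the respective weak operator topology. The same argument applies to the inverse map by symmetry, so the map is a homeomorphism.

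The main obstacle is purely bookkeeping. One has to verify the block-inverse formula carefully, keeping track of the embeddings $\iota_0,\iota_1$ so that $b_{jk}=\iota_j^\ast a^{-1}\iota_k$ matches the matrix entries. The point to emphasise is that $S$ appears as a defining map in its own right, not as $S^{-1}$; this is what avoids any need for continuity of inversion in the weak operator topology.
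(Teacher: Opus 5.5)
Your argument is correct: the block-inverse formula (the same one the paper uses in \eqref{eq:AbstrCurlFlux}) yields exactly your four identities, so inversion permutes the defining maps of $\tau(\mathcal{H}_0,\mathcal{H}_1)$ and $\tau(\mathcal{H}_1,\mathcal{H}_0)$ up to sign, and continuity in both directions follows from the universal property of initial topologies. The paper does not reprove this lemma but only cites \cite[Lemma 5.4]{Wa25}, pointing to this same Schur-complement block decomposition, so your route is essentially the intended one.
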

\begin{theorem}[{{\cite[Theorem 5.8]{Wa25}}}]\label{thm:fintesub}
Consider a Hilbert space $\mathcal{H}$, a closed subspace $\mathcal{H}_0\subseteq \mathcal{H}$, $\mathcal{H}_1\coloneqq \mathcal{H}_0^\perp$, and $0<\alpha\leq\beta$.

 Let $(a_n)_{n\in\N}$ and $a$ be from $ \mathcal{M}(\alpha,\beta;\mathcal{H})$ and $\mathcal{F}\subseteq \mathcal{H}_0^{\bot}$ finite-dimensional. Define $\mathcal{K}_0\coloneqq \mathcal{H}_0\oplus \mathcal{F}$ and $\mathcal{K}_1\coloneqq \mathcal{K}_0^\bot$. Then
\[
  a_n \to a \text{ in }\tau(\mathcal{H}_0,\mathcal{H}_1)\iff     a_n \to a \text{ in }\tau(\mathcal{K}_0,\mathcal{K}_1).
\]
\end{theorem}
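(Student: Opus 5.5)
The plan is to recast convergence in $\tau(\mathcal{H}_0,\mathcal{H}_1)$ as weak operator convergence of a single operator, the \emph{partial inverse} (principal pivot transform) of $a$, and then to show that passing from $\mathcal{H}_0$ to $\mathcal{K}_0=\mathcal{H}_0\oplus\mathcal{F}$ amounts to one further pivot on the finite-dimensional block $\mathcal{F}$. A pivot on a finite-dimensional block preserves weak operator convergence for uniformly bounded families.

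For $a\in\mathcal{M}(\alpha,\beta;\mathcal{H})$ and the decomposition $\mathcal{H}=\mathcal{H}_0\oplus\mathcal{H}_1$, set
\[
 S_{\mathcal{H}_0}(a)\coloneqq\begin{pmatrix} a_{00}^{-1} & -a_{00}^{-1}a_{01}\\ a_{10}a_{00}^{-1} & a_{11}-a_{10}a_{00}^{-1}a_{01}\end{pmatrix}.
\]
By definition, $a_n\to a$ in $\tau(\mathcal{H}_0,\mathcal{H}_1)$ if and only if $S_{\mathcal{H}_0}(a_n)\to S_{\mathcal{H}_0}(a)$ in the weak operator topology of $\Lb(\mathcal{H})$. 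By the estimates recalled in the proof of \Cref{lemma:CompSchurABPlusMultOp}, the family $\{S_{\mathcal{H}_0}(a): a\in\mathcal{M}(\alpha,\beta;\mathcal{H})\}$ is uniformly bounded.

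\textbf{Pivot identity.} Next I refine $\mathcal{H}_1=\mathcal{F}\oplus\mathcal{K}_1$ and write $S_{\mathcal{H}_0}(a)$ as a $3\times3$ block matrix $(s_{ij})$ with respect to $\mathcal{H}_0\oplus\mathcal{F}\oplus\mathcal{K}_1$. The $\mathcal{F}\mathcal{F}$-block is $s_{FF}=\pi_{\mathcal{F}}^*(a_{11}-a_{10}a_{00}^{-1}a_{01})\pi_{\mathcal{F}}$, the compression of the Schur complement to $\mathcal{F}$. Since $\Re(a_{11}-a_{10}a_{00}^{-1}a_{01})\geq\alpha$, we get $\Re s_{FF}\geq\alpha$, so $s_{FF}$ is invertible with $\lVert s_{FF}^{-1}\rVert\leq1/\alpha$. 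The key algebraic step is the standard pivot composition rule (a block Gaussian elimination computation): $S_{\mathcal{K}_0}(a)$ is obtained from $S_{\mathcal{H}_0}(a)$ by pivoting on the $\mathcal{F}$-block. Concretely, every block of $S_{\mathcal{K}_0}(a)$ is one of
\[
 s_{FF}^{-1},\qquad \pm s_{FF}^{-1}s_{Fj},\qquad \pm s_{iF}s_{FF}^{-1},\qquad s_{ij}-s_{iF}s_{FF}^{-1}s_{Fj}\qquad (i,j\in\{0,1\}),
\]
where index $0$ refers to $\mathcal{H}_0$ and index $1$ to $\mathcal{K}_1$. I would verify this by checking that both sides solve the same linear relation $a\binom{x_0}{x_1}=\binom{y_0}{y_1}$ with the variables exchanged on $\mathcal{K}_0$. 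Since pivoting twice on the same block returns the original operator, the converse direction of the equivalence follows symmetrically: pivoting $S_{\mathcal{K}_0}(a)$ on $\mathcal{F}$ gives back $S_{\mathcal{H}_0}(a)$, and the $\mathcal{F}\mathcal{F}$-block of $S_{\mathcal{K}_0}(a)$, namely $s_{FF}^{-1}$, again has real part bounded below, by $\alpha/\lVert s_{FF}\rVert^2$ with $\lVert s_{FF}\rVert$ uniformly bounded.

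\textbf{Weak convergence under the pivot.} Assume $S_{\mathcal{H}_0}(a_n)\to S_{\mathcal{H}_0}(a)$ in the weak operator topology. The argument rests on three observations.
\begin{enumerate}
 \item On the finite-dimensional space $\mathcal{F}$, weak operator convergence of $s_{FF,n}$ is norm convergence. Together with the uniform bound $1/\alpha$, this gives $s_{FF,n}^{-1}\to s_{FF}^{-1}$ in norm.
 \item Operators $s_{Fj,n}$ taking values in the finite-dimensional space $\mathcal{F}$ converge strongly, i.e., $s_{Fj,n}x\to s_{Fj}x$ in norm for every $x$.
 \item Operators $s_{iF,n}$ with finite-dimensional domain satisfy $s_{iF,n}y_n\rightharpoonup s_{iF}y$ whenever $y_n\to y$ in $\mathcal{F}$. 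To see this, expand $y_n$ in a basis of $\mathcal{F}$: the coefficients converge and $s_{iF,n}e_m\rightharpoonup s_{iF}e_m$ for each basis vector.
\end{enumerate}
Combining these, $s_{iF,n}s_{FF,n}^{-1}s_{Fj,n}x\rightharpoonup s_{iF}s_{FF}^{-1}s_{Fj}x$ for every $x$, and every other block of $S_{\mathcal{K}_0}(a_n)$ converges in the weak operator topology in the same way. Hence $a_n\to a$ in $\tau(\mathcal{K}_0,\mathcal{K}_1)$.

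The main obstacle is the bookkeeping in the pivot composition identity, including getting the signs right under the chosen sign convention for the Schur topology maps. The analytic part is routine once finite-dimensionality of $\mathcal{F}$ is used to upgrade weak convergence to norm or strong convergence in the middle factor.
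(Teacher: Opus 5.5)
The paper does not prove this statement. It quotes it from \cite[Theorem~5.8]{Wa25} and uses it as a black box in the proofs of \Cref{cor:curl} and \Cref{thm:curlhess}, so there is no in-paper argument to compare yours with. Judged on its own, your proof is correct.

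Encoding the four Schur maps as the single operator $S_{\mathcal{H}_0}(a)\colon(y_0,x_1)\mapsto(x_0,y_1)$ (for $ax=y$) reduces the theorem to one claim: $S_{\mathcal{K}_0}(a)$ is the pivot of $S_{\mathcal{H}_0}(a)$ on the $\mathcal{F}\mathcal{F}$-block. Your graph-relation justification of this is the right one. The relation $ax=y$ determines a unique operator $(y_{\mathcal{K}_0},x_{\mathcal{K}_1})\mapsto(x_{\mathcal{K}_0},y_{\mathcal{K}_1})$, because the compression of $a$ to $\mathcal{K}_0$ is invertible by $\Re a\geq\alpha$. The block formulas you list then follow by eliminating $x_{\mathcal{F}}$ from the $\mathcal{F}$-row.

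The analytic part is sound, and the bounds you need are available:
\begin{itemize}
\item $\Re s_{FF}\geq\alpha$, because $\Re(a_{11}-a_{10}a_{00}^{-1}a_{01})\geq\alpha$.
\item $\lVert s_{FF}\rVert\leq\beta$, because $a_{11}-a_{10}a_{00}^{-1}a_{01}=((a^{-1})_{11})^{-1}$ and $\Re(a^{-1})_{11}\geq 1/\beta$.
\item Together these give the uniform bound $\Re s_{FF}^{-1}\geq\alpha/\beta^2$ that you use in the converse.
\end{itemize}
In step (i) the uniform bound is not even needed. Norm convergence on a finite-dimensional space to an invertible limit already forces the inverses to converge.

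What your route buys beyond the citation is transparency. The symmetry between the two directions is just the involutivity of the pivot. The same bookkeeping also shows $S_{\mathcal{H}_1}(a^{-1})=S_{\mathcal{H}_0}(a)$ as operators on $\mathcal{H}$, which makes \Cref{lem:inv} essentially immediate too.
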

\begin{proof}[Proof of \Cref{cor:curl}.] $a_n\to a$ in $\tau_{\Htopo}$ is equivalent to $a_n\to a$ in $\tau(g_0(\Omega),g_0(\Omega)^{\bot})$ by \Cref{thm:main-proto} (or \Cref{thm:main-general}). The Helmholtz decomposition now reads
\[
   \Lp{2}(\Omega)^3 = g_0(\Omega)\oplus \ran(\curl) \oplus \mathcal{F},
\]
where $\mathcal{F}\coloneqq\ker(\ccurl)\cap \ker(\dive)$ with $\dive\coloneqq (\cgrad)^\ast$, i.e., the usual weak divergence on its maximal $\Lp{2}$-domain. Since $\Omega$ is a weak Lipschitz domain, the Picard--Weber--Weck selection theorem (see \cite{Pi84}) yields that $\mathcal{F}\subseteq \dom(\ccurl)\cap \dom(\dive)$ is finite-dimensional. Thus, by \Cref{thm:fintesub}, we obtain that $a_n\to a$ in $\tau(g_0(\Omega),g_0(\Omega)^{\bot})$ is equivalent to 
\[
  a_n\to a \text{ in }\tau(g_0(\Omega)\oplus \mathcal{F},\ran(\curl)),
\]
which by \Cref{lem:inv} is the same as saying
\[
  a_n^{-1}\to a^{-1} \text{ in }\tau(\ran(\curl),\ran(\curl)^{\bot}).
\]
Using \Cref{thm:main-general}, we deduce that the latter is equivalent to (ii) in \Cref{cor:curl} which proves the claim.\end{proof} 

\begin{remark} Note that Free Lunch Theorems of the form of \Cref{cor:curl} are particularly interesting because both the problems considered in (i) and (ii) are divergence form problems using differential operators. This will always be the case, when the problem in (ii) of \Cref{prop:equivprobl} (modulo finite-dimensional subspace) can be written as a problem in divergence form. It is not difficult to see that this is the case when $\ran(C)^\bot$ is the range of a differential operator (modulo finite-dimensional subspace). Since $\ran(C)^\bot =\ker(C^*)$ is to be a range of a differential operator, Free Lunch Theorems with differential operators can be expected if $C$ is part of a Hilbert complex structure. This property has been used in \cite{W17_DCL,Wa18,Pauly2019} in the context of homogenisation. In \Cref{sec:NonTrivEx}, we will treat an example with differential operators of different order stemming from one Hilbert complex structure.
\end{remark}
\section{Homogenisation of a Fourth-Order Elliptic Problem}\label{sec:NonTrivEx}

In order to show the strenght of the Free Lunch Theorem and \Cref{thm:Hcom} as a generalisation of \cite[Theorem~6.5]{Ta09}, we will discuss non-periodic homogenisation of a fourth-order elliptic problem corresponding to the biharmonic equation. The appearing Hilbert complex structure(s) and differential operators have already been treated extensively in the literature, see, most notably in \cite{PZ20} and, for a homogenisation context, \cite{W17_DCL}.

We assume that $\Omega\subseteq \R^3$ is an open and bounded strong Lipschitz domain. We define the densely defined and closed second-order differential operator induced by the Hessian
\[
\cgradgrad\colon\dom(\cgradgrad)\subseteq\Lp{2}(\Omega)\to \Lp{2,\sym}(\Omega)^{3 \times 3}
\]
 to be the operator closure of
\[
\cgradgrad\restriction_{\Cc(\Omega)}\colon \begin{cases}
\hfill\Cc(\Omega)\subseteq\Lp{2}(\Omega)  &\to \Lp{2,\sym}(\Omega)^{3 \times 3}\\
\hfill \phi&\mapsto (\partial_j\partial_i \phi)_{i,j=1}^3
\end{cases}\text{.}
\]
A standard density argument with test functions shows $\dom(\cgradgrad)=\cH^2(\Omega)$, and subsequently
 $\ker(\cgradgrad)=\{0\}$ as well as closedness of $\ran(\cgradgrad)$ (compare \cite[Lemma~3.3]{PZ20}). Thus, \cite[Theorem 3.1]{TrWa14} immediately
 yields (see also \Cref{thm:wp}):
 \begin{lemma}\label{lemma:EllipticWP4thOrderGGrad}
 Let $a\in \Lb(\Lp{2,\sym}(\Omega)^{3 \times 3})$ with $\iota_0^\ast a\iota_0$ boundedly invertible, where
 \[
 \iota_0\colon \ran(\cgradgrad)\hookrightarrow \Lp{2,\sym}(\Omega)^{3 \times 3}
 \]
 is the canonical embedding.

If $f\in \sobH^{-2}(\Omega)$, then there exists a uniquely determined $u\in \cH^2(\Omega)$ such that
\begin{equation}\label{eq:WPGradGradPDE}
    \forall \phi\in \cH^2(\Omega):\langle a \cgradgrad u, \cgradgrad\phi\rangle_{\Lp{2,\sym}(\Omega)^{3 \times 3}} = f(\phi).
 \end{equation}
To be precise, we have $u= \operatorname{LM}_{\cgradgrad}(a) f$, where
\[
\Lb(\sobH^{-2}(\Omega),\cH^2(\Omega))\ni \operatorname{LM}_{\cgradgrad}(a)
\coloneqq (\iota_0^\ast \widetilde{\cgradgrad})^{-1}(\iota_0^\ast a\iota_0)^{-1}\bigl(\bigl(\widetilde{\cgradgrad} \bigr)^\diamond\iota_0\bigr)^{-1}\text{,}
\]
where
\[
\iota_0^\ast \widetilde{\cgradgrad}\colon \begin{cases}
\hfill\cH^2(\Omega)  &\to \ran(\cgradgrad)\\
\hfill \phi&\mapsto \iota_0^\ast \cgradgrad \phi
\end{cases}\text{,}
\]
and
\[
\bigl(\widetilde{\cgradgrad}\bigr)^\diamond\iota_0\colon \begin{cases}
\hfill\ran(\cgradgrad)  &\to \sobH^{-2}(\Omega)\\
\hfill q &\mapsto \Bigl(\cH^2(\Omega)\ni \phi \mapsto \langle q,\cgradgrad \phi\rangle_{\Lp{2,\sym}(\Omega)^{3 \times 3}}\Bigr)
\end{cases}\text{,}
\]
are (anti-)linear,
bounded, and have (anti-)linear and bounded inverses.
 \end{lemma}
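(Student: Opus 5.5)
The plan is to reduce \Cref{lemma:EllipticWP4thOrderGGrad} to the abstract well-posedness argument that precedes \Cref{thm:wp}. The only difference is that $\cgradgrad$ is a second-order operator, so it is not literally of the form $D_0\subseteq C\subseteq D$. We therefore redo that argument with $C\coloneqq\cgradgrad$, $\mathcal{H}\coloneqq\Lp{2,\sym}(\Omega)^{3\times 3}$ and $V\coloneqq\dom(\cgradgrad)\cap\ker(\cgradgrad)^\perp$.

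\textbf{Identifying the spaces.} By the facts recalled before the lemma, $\dom(\cgradgrad)=\cH^2(\Omega)$ and $\ker(\cgradgrad)=\{0\}$. Hence $V=\cH^2(\Omega)$, with the graph norm equivalent to the $\sobH^2$-norm, and $V'=\sobH^{-2}(\Omega)$.

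\textbf{Invertibility of the two auxiliary maps.} Since $\ran(\cgradgrad)$ is closed, the map
\[
\iota_0^\ast\widetilde{\cgradgrad}\colon\cH^2(\Omega)\to\ran(\cgradgrad)
\]
is bounded and bijective, because it is injective and surjective. By the open mapping theorem it has a bounded inverse; this plays the role of \Cref{rem:CompactInvC}, except that no compactness is needed here.

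Next, $\bigl(\widetilde{\cgradgrad}\bigr)^\diamond$ is composed with the antilinear Riesz map $\Phi$ of $\cH^2(\Omega)$, giving $\Phi\bigl(\widetilde{\cgradgrad}\bigr)^\diamond=\bigl(\widetilde{\cgradgrad}\bigr)^\ast$. Closed range of the operator then gives closed range of its adjoint. We read off
\[
\ker\bigl(\bigl(\widetilde{\cgradgrad}\bigr)^\diamond\bigr)=\ran(\cgradgrad)^\perp
\quad\text{and}\quad
\ran\bigl(\bigl(\widetilde{\cgradgrad}\bigr)^\diamond\bigr)=\sobH^{-2}(\Omega),
\]
the latter because the kernel of $\widetilde{\cgradgrad}$ is trivial. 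Restricting to $\ran(\cgradgrad)$ via $\iota_0$ therefore yields an antilinear bounded bijection $\ran(\cgradgrad)\to\sobH^{-2}(\Omega)$, and its inverse is bounded.

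\textbf{Reformulating the variational problem.} We follow the chain of equivalences after \eqref{eq:ellipticWPThmProof}. For $u\in\cH^2(\Omega)$, problem \eqref{eq:WPGradGradPDE} is equivalent to
\[
\bigl(\widetilde{\cgradgrad}\bigr)^\diamond a\cgradgrad u=f.
\]
Since $\bigl(\widetilde{\cgradgrad}\bigr)^\diamond$ vanishes on $\ran(\cgradgrad)^\perp$, this is in turn equivalent to
\[
\bigl(\widetilde{\cgradgrad}\bigr)^\diamond\iota_0\iota_0^\ast a\iota_0\iota_0^\ast\cgradgrad u=f.
\]
Here we used $\cgradgrad u=\iota_0\iota_0^\ast\cgradgrad u$.

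\textbf{Solving and uniqueness.} Inverting the three invertible factors in turn, namely the diamond map, $\iota_0^\ast a\iota_0$ (invertible by assumption) and $\iota_0^\ast\widetilde{\cgradgrad}$, gives existence and uniqueness of $u$. It also gives exactly the formula for $\operatorname{LM}_{\cgradgrad}(a)$. Boundedness of $\operatorname{LM}_{\cgradgrad}(a)$ follows from it being a composition of bounded maps.

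\textbf{Main obstacle.} The main care point is the identification of the range of the diamond map, i.e., the closed-range adjoint argument. This is straightforward from \cite[Proposition 1.2.3]{BW26}, whose proof is purely operator-theoretic and does not use the first-order structure.
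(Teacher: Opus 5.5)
Your proposal is correct and follows essentially the paper's route. The paper notes $\dom(\cgradgrad)=\cH^2(\Omega)$, $\ker(\cgradgrad)=\{0\}$ and closedness of $\ran(\cgradgrad)$, and then invokes the purely operator-theoretic well-posedness result \cite[Theorem 3.1]{TrWa14} (the argument preceding \Cref{thm:wp}); you carry out that same argument explicitly for $C=\cgradgrad$, using the open mapping and closed range theorems.
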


By identifying $\Lp{2,\sym}(\Omega)^{3 \times 3}$ with $\Lp{2}(\Omega)^{6}$ (counting row-wise in the upper-triangular part) and
\[
\Lp{2,\dev}(\Omega)^{3 \times 3}\coloneqq\{\Psi\in \Lp{2}(\Omega)^{3 \times 3}:\Psi\text{ is deviatoric (i.e., trace-free)}\}
\]
with $\Lp{2}(\Omega)^{8}$ (counting row-wise and leaving out the $(3,3)$-entry),
we define
\[
\sym\curl_{\dev}\colon \dom(\sym\curl_{\dev})\subseteq \Lp{2,\dev}(\Omega)^{3 \times 3}\to \Lp{2,\sym}(\Omega)^{3 \times 3}
\]
as $D$ corresponding to $   \mathcal{D}= \sum_{j=1}^3 \partial_j A_j$ with
\begin{gather*}
A_1\coloneqq
\begin{psmallmatrix}
0&0&0&0&0&0&0&0\\
0&0&-1/2&0&0&0&0&0\\
0&1/2&0&0&0&0&0&0\\
0&0&0&0&0&-1&0&0\\
1/2&0&0&0&1&0&0&0\\
0&0&0&0&0&0&0&1
\end{psmallmatrix}\text{,}\qquad
A_2\coloneqq
\begin{psmallmatrix}
0&0&1&0&0&0&0&0\\
0&0&0&0&0&1/2&0&0\\
-1&0&0&0&-1/2&0&0&0\\
0&0&0&0&0&0&0&0\\
0&0&0&-1/2&0&0&0&0\\
0&0&0&0&0&0&-1&0
\end{psmallmatrix}\text{, and}\\
A_3\coloneqq
\begin{psmallmatrix}
0&-1&0&0&0&0&0&0\\
1/2&0&0&0&-1/2&0&0&0\\
0&0&0&0&0&0&0&-1/2\\
0&0&0&1&0&0&0&0\\
0&0&0&0&0&0&1/2&0\\
0&0&0&0&0&0&0&0
\end{psmallmatrix}\text{.}
\end{gather*}
It is straightforward to show that this indeed corresponds to the row-wise application of $\curl$ with final symmetrisation on the maximal weak $\Lp{2,\dev}(\Omega)^{3 \times 3}$-domain. Note that already $A_1$ and $A_2$ suffice to
obtain (nd). The selection theorem \cite[Lemma~3.22]{PZ20} together with the complex property \cite[Lemma~3.7]{PZ20} readily imply (com) together with the
Helmholtz decomposition
\begin{equation}\label{eq:HelmholtzGradGradSymCurlDev}
\Lp{2,\sym}(\Omega)^{3 \times 3}=\ran(\sym\curl_{\dev})\oplus\ran(\cgradgrad)\oplus\mathcal{G}\text{,}
\end{equation}
where $\mathcal{G}\coloneqq \ker((\sym\curl_{\dev})^\ast)\cap\ker((\cgradgrad)^\ast)$ is of finite dimension.  Therefore, we can apply both the entirety of the results from \Cref{sec:HConForC} and the theory of nonlocal $\Htopo$-convergence.

Here, the proof of \Cref{thm:Hcom} without assuming condition (ap) from \Cref{rem:ProofWIthFEMPropAP} is crucial since, to the best of the authors' knowledge, FEM-theory for $\sym\curl_{\dev}$ does not exist yet. The homogenisation result now reads as follows.
   \begin{theorem}\label{thm:curlhess} Let $\Omega\subseteq \R^3$ be an open and bounded strong Lipschitz domain, and $0<\alpha\leq\beta$. Let $(a_n)_{n\in\N}$ in $M(\alpha,\beta;\sym\curl_{\dev})$ and $a\in M(\alpha,\beta;\sym\curl_{\dev})$. 
    Then,
    \[
    a_n\to a\text{ in }\tau_{\Htopo}
    \]
    implies that for all $f\in \sobH^{-2}(\Omega)$ and $u_n\in \cH^2(\Omega)$, $n\in \N$, given as the unique solutions to (compare \eqref{eq:WPGradGradPDE})
    \[
       \forall \phi\in \cH^2(\Omega):\langle a_n^{-1} \cgradgrad u_n, \cgradgrad\phi\rangle_{\Lp{2,\sym}(\Omega)^{3 \times 3}} = f(\phi),
    \]
    we have
    \[
        u_n \rightharpoonup u\in \cH^2(\Omega)\text{ and } a_n^{-1}\cgradgrad u_n \rightharpoonup a^{-1}\cgradgrad u \in \Lp{2,\sym}(\Omega)^{3 \times 3},
    \]
    where $u\in \cH^2(\Omega)$ is the unique solution to
    \[
           \forall \phi\in \cH^2(\Omega):\langle a^{-1} \cgradgrad u, \cgradgrad\phi\rangle_{\Lp{2,\sym}(\Omega)^{3 \times 3}} = f(\phi).
    \]
    \end{theorem}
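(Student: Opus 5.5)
The argument mirrors the proof of \Cref{cor:curl}, with the roles of $\cgrad$ and $\curl$ played by $C\coloneqq\sym\curl_{\dev}$ and $\cgradgrad$.

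\textbf{Step 1 (from $\Htopo$- to Schur convergence).} The text preceding the statement records that $C$ satisfies (nd) and (com). Hence \Cref{thm:main-general} applies, and $a_n\to a$ in $\tau_{\Htopo}$ yields
\[
a_n\to a \text{ in } \tau(\ran(C),\ran(C)^\perp).
\]

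\textbf{Step 2 (absorbing the finite-dimensional part).} The Helmholtz decomposition \eqref{eq:HelmholtzGradGradSymCurlDev} gives
\[
\ran(C)^\perp=\ran(\cgradgrad)\oplus\mathcal{G}
\]
with $\mathcal{G}$ finite-dimensional. Applying \Cref{thm:fintesub} with $\mathcal{H}_0=\ran(C)$ and $\mathcal{F}=\mathcal{G}$ shows that Schur convergence is equivalent to
\[
a_n\to a \text{ in } \tau(\ran(C)\oplus\mathcal{G},\ran(\cgradgrad)).
\]
This step needs $a_n,a\in\mathcal{M}(\alpha,\beta;\Lp{2,\sym}(\Omega)^{3\times3})$. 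For multiplication operators in $M(\alpha,\beta;C)$ this holds, since the pointwise bounds $\Re a\geq\alpha$ and $\Re a^{-1}\geq1/\beta$ integrate to the operator bounds. Then \Cref{lem:inv} gives
\[
a_n^{-1}\to a^{-1}\text{ in }\tau(\ran(\cgradgrad),\ran(\cgradgrad)^\perp).
\]

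\textbf{Step 3 (translating back to the fourth-order problem).} Set $\iota_0\colon\ran(\cgradgrad)\hookrightarrow\Lp{2,\sym}(\Omega)^{3\times3}$ and $b_n\coloneqq a_n^{-1}$. \Cref{lemma:EllipticWP4thOrderGGrad} gives
\[
\operatorname{LM}_{\cgradgrad}(b_n)=(\iota_0^\ast\widetilde{\cgradgrad})^{-1}\,b_{n,00}^{-1}\,\bigl((\widetilde{\cgradgrad})^\diamond\iota_0\bigr)^{-1}.
\]
The outer factors are fixed isomorphisms. So weak operator convergence $b_{n,00}^{-1}\to b_{00}^{-1}$ yields $u_n\rightharpoonup u$ in $\cH^2(\Omega)$ for every $f\in\sobH^{-2}(\Omega)$.

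For the fluxes, the computation from the proof of \Cref{thm:main-general} gives
\[
b_n\iota_0 b_{n,00}^{-1}=\iota_0+\iota_1 b_{n,10}b_{n,00}^{-1}.
\]
Therefore
\[
b_n\cgradgrad u_n=\bigl(\iota_0+\iota_1b_{n,10}b_{n,00}^{-1}\bigr)\bigl((\widetilde{\cgradgrad})^\diamond\iota_0\bigr)^{-1}f,
\]
which converges weakly to $a^{-1}\cgradgrad u$ by Schur convergence of the $b_n$. The limit $u$ solves the limit problem because $b=a^{-1}$, and $b_{00}$ is invertible since $\Re a^{-1}\geq1/\beta$.

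\textbf{Main obstacle.} I expect the delicate point to be checking that $\cgradgrad$ fits the abstract framework, even though it is second order and not of the form $D_0\subseteq C\subseteq D$. Only \Cref{lemma:EllipticWP4thOrderGGrad} and the abstract Schur algebra are used in Step 3, never (com) or (nd) for $\cgradgrad$. The hypotheses of \Cref{thm:fintesub} and \Cref{lem:inv} are purely Hilbert-space statements, so this route should go through.
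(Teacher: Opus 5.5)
Your proposal is correct and follows the paper's proof essentially step by step. You combine \Cref{thm:main-general}, the Helmholtz decomposition, \Cref{thm:fintesub} and \Cref{lem:inv} to get $a_n^{-1}\to a^{-1}$ in $\tau(\ran(\cgradgrad),\ran(\cgradgrad)^\perp)$, and then use the factorisation of $\operatorname{LM}_{\cgradgrad}(a_n^{-1})$ and the identity $b\iota_0 b_{00}^{-1}=\iota_0+\iota_1 b_{10}b_{00}^{-1}$ to pass to the limit in $u_n$ and in the fluxes, making explicit details the paper leaves implicit (such as $M(\alpha,\beta;C)\subseteq\mathcal{M}(\alpha,\beta;\Lp{2,\sym}(\Omega)^{3\times3})$).
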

    \begin{proof}
    From \Cref{thm:main-general}, the Helmholtz decomposition \eqref{eq:HelmholtzGradGradSymCurlDev}, \Cref{thm:fintesub}, and \Cref{lem:inv},
    we deduce $a_n^{-1}\to a^{-1}$ in $\tau(\ran(\cgradgrad),\ran(\cgradgrad)^\perp)$. Considering $\operatorname{LM}_{\cgradgrad}(a_n^{-1})$ from
    \Cref{lemma:EllipticWP4thOrderGGrad} and the Schur convergence $(a_n^{-1})_{00}^{-1}\to (a^{-1})_{00}^{-1}$ in the corresponding weak operator topology, we obtain $u_n \rightharpoonup u\in \cH^2(\Omega)$. Finally, from the Schur convergence $(a^{-1}_n)_{10}(a^{-1}_n)_{00}^{-1}\to (a^{-1})_{10}(a^{-1})_{00}^{-1}$
     in the corresponding weak operator topology, \Cref{lemma:EllipticWP4thOrderGGrad}, and
    \begin{align*}
  a^{-1}_n\widetilde{\cgradgrad}  \operatorname{LM}_{\cgradgrad}(a^{-1}_n) 
  & =  a^{-1}_n\widetilde{\cgradgrad} (\iota_0^\ast \widetilde{\cgradgrad})^{-1}(\iota_0^\ast a^{-1}_n\iota_0)^{-1}\bigl(\bigl(\widetilde{\cgradgrad} \bigr)^\diamond\iota_0\bigr)^{-1}\\
  & =  a^{-1}_n\iota_0 (a_n^{-1})_{00}^{-1}\bigl(\bigl(\widetilde{\cgradgrad} \bigr)^\diamond\iota_0\bigr)^{-1}\text{,}
   \end{align*}
   we infer $a_n^{-1}\cgradgrad u_n \rightharpoonup a^{-1}\cgradgrad u \in \Lp{2,\sym}(\Omega)^{3 \times 3}$.
    \end{proof}

As a particular application of the Free Lunch Theorem just presented, we obtain a compactness result for the fourth order problem, which requires modifying standard techniques since the div-curl lemma as presented in \Cref{lem:dceasy} and the proof of the compactness theorem itself, \Cref{thm:Hcom}, make use of the product rule for first order differential operators. We emphasise the identification of $\Lp{2,\sym}(\Omega)^{3 \times 3}$ with $\Lp{2}(\Omega)^{6}$ and thus bounded multiplication operators of the former and $6\times 6$-matrices.
\begin{corollary}
Let $\Omega\subseteq \R^3$ be an open and bounded strong Lipschitz domain, and $0<\alpha\leq\beta$. Let $(a_n)_{n\in\N}$ be such that
\[
   b_n \in  \Lp{\infty}(\Omega)^{6 \times 6}\text{ with } \Re b_n\geq \alpha, \Re b_n^{-1}\geq 1/\beta.
\]  Then, there exists a subsequence (not relabeled) of $(b_{n})_n$ and $b \in  \Lp{\infty}(\Omega)^{6 \times 6}$ such that for all $f\in \sobH^{-2}(\Omega)$ and $u_n\in \cH^2(\Omega)$, $n\in \N$, given as the unique solutions to 
    \[
       \forall \phi\in \cH^2(\Omega):\langle b_{n} \cgradgrad u_n, \cgradgrad\phi\rangle_{\Lp{2,\sym}(\Omega)^{3 \times 3}} = f(\phi),
    \]
    we have
    \[
        u_n \rightharpoonup u\in \cH^2(\Omega)\text{ and } b_{n}\cgradgrad u_n \rightharpoonup b\cgradgrad u \in \Lp{2,\sym}(\Omega)^{3 \times 3},
    \]
    where $u\in \cH^2(\Omega)$ is the unique solution to
    \[
           \forall \phi\in \cH^2(\Omega):\langle b \cgradgrad u, \cgradgrad\phi\rangle_{\Lp{2,\sym}(\Omega)^{3 \times 3}} = f(\phi).
    \]
\end{corollary}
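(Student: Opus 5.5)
The plan is to pass the compactness problem for the fourth-order operator through the inverses, where the first-order theory of \Cref{sec:HConForC} is available, and then return by \Cref{thm:curlhess}. Set $a_n\coloneqq b_n^{-1}$. Since $\Re b_n\geq\alpha$ and $\Re b_n^{-1}\geq 1/\beta$, we get $\Re a_n\geq 1/\beta$ and $\Re a_n^{-1}\geq \alpha$. Hence $a_n\in M(1/\beta,1/\alpha;\sym\curl_{\dev})$, where we use the identification of $\Lp{2,\sym}(\Omega)^{3\times 3}$ with $\Lp{2}(\Omega)^6$, i.e.\ $k=6$ for $C=\sym\curl_{\dev}$. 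The relevant bounds are $\|a_n\|\leq\alpha^{-1}$ and $\|b_n\|\leq \beta$, and the hypotheses of \Cref{thm:Hcom} for this operator were verified above: (nd) already holds via $A_1,A_2$, and (com) follows from \cite[Lemma~3.22]{PZ20}.

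The key steps are as follows.

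First, apply \Cref{thm:Hcom} to $(a_n)_{n\in\N}$ in $M(1/\beta,1/\alpha;\sym\curl_{\dev})$. This gives a subsequence, not relabelled, and some $a\in M(1/\beta,1/\alpha;\sym\curl_{\dev})$ with $a_n\to a$ in $\tau_{\Htopo}$. This step is exactly where the proof variant of \Cref{thm:Hcom} without (ap) is needed, since no FEM theory is available for $\sym\curl_{\dev}$.

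Second, invoke \Cref{thm:curlhess} with the constants $(1/\beta,1/\alpha)$ in place of $(\alpha,\beta)$. Its proof only uses the general results \Cref{thm:main-general}, \Cref{thm:fintesub} and \Cref{lem:inv} together with the Helmholtz decomposition \eqref{eq:HelmholtzGradGradSymCurlDev}, and none of these depends on the specific constants. It yields $u_n\rightharpoonup u$ in $\cH^2(\Omega)$ and $a_n^{-1}\cgradgrad u_n\rightharpoonup a^{-1}\cgradgrad u$, where $u$ solves the limit problem with coefficient $a^{-1}$.

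Third, put $b\coloneqq a^{-1}$. Since $a\in M(1/\beta,1/\alpha;\sym\curl_{\dev})$ is an $\Lp{\infty}$ matrix with $\Re a\geq 1/\beta$, it is pointwise a.e.\ invertible with $|a(x)^{-1}|\leq\beta$. Hence $b\in\Lp{\infty}(\Omega)^{6\times6}$, and it satisfies $\Re b\geq\alpha$ and $\Re b^{-1}\geq 1/\beta$. Because $a_n^{-1}=b_n$, the conclusion of the second step is precisely the claim.

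The main obstacle is bookkeeping rather than analysis. One has to confirm that \Cref{thm:curlhess} is genuinely insensitive to the constants, and that the multiplication operator $a^{-1}$ on $\Lp{2}(\Omega)^6$ coincides with the pointwise inverse matrix. The second point is immediate from the a.e.\ bound on $a(x)^{-1}$ noted in the third step. I would also remark that the statement's ``$(a_n)_{n\in\N}$'' should read $(b_n)_{n\in\N}$.
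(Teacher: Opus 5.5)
Your proposal is correct and is the paper's proof: pass to $a_n\coloneqq b_n^{-1}\in M(1/\beta,1/\alpha;\sym\curl_{\dev})$, extract an $\Htopo$-convergent subsequence with limit $a$ via \Cref{thm:Hcom}, and conclude with \Cref{thm:curlhess} and $b\coloneqq a^{-1}$. The extra checks you flag need no further work, since \Cref{thm:curlhess} is stated for arbitrary $0<\alpha\leq\beta$, and the pointwise bound $\lvert a(x)^{-1}\rvert\leq\beta$ gives $b\in\Lp{\infty}(\Omega)^{6\times 6}$.
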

\begin{proof}
Note that $a_n \coloneqq b_n^{-1} \in M(1/\beta,1/\alpha;\sym\curl_{\dev})   $. Thus, we may apply \Cref{thm:Hcom} and obtain a subsequence of $(a_n)_n$ converging in $(M(1/\beta,1/\alpha;\sym\curl_{\dev}),\tau_{\Htopo})$ with corresponding limit $a\eqqcolon b^{-1}$. Hence, the claim follows from \Cref{thm:curlhess}.
\end{proof}

\section{Conclusion}\label{sec:con}

We provided an independent proof of \cite[Theorem 5.11]{Wa18}, the original proof of which contained a gap. Since \cite[Theorem 5.11]{Wa18} forms the core argument as to why nonlocal $\Htopo$-convergence is a proper generalisation of local $\Htopo$-convergence, this achievement reconfirmed all rationales that based on \cite[Theorem 5.11]{Wa18} to be true. The techniques developed for this independent results also apply to a broader class of differential operators. Thus, we extended and justified the generalisation for a bigger class of operators. In passing, we extended the definition of local $\Htopo$-convergence to a broader class of operators and provided fundamental observations like compactness and the continuity of  computing the adjoint in this context. The whole set-up also provided us with a Free Lunch Theorem, that serves us convergence of an associated class of problems given the initial class of homogenisation problems do converge in some $\Htopo$-sense. We applied this Free Lunch Theorem to show a homogenisation result for a fourth-order elliptic problem.

Future applications will now be handled much easier and with a wider impact in particular on homogenisation problems for time-dependent PDEs. We note that the new, generalised version of \cite[Theorem 5.11]{Wa18} does not explicitly require smoothness of the domain or assumptions resulting in the triviality of harmonic Dirichlet and Neumann fields, implying an immediate impact on abstract results as provided for instance in \cite{BuSkWa24}.

\section*{Data Availability}

Data sharing is not applicable to this article as no datasets were generated or analysed in this study.

\section*{Conflicts of Interest}

The authors declare no conflicts of interest.

\end{document}